\numberwithin{equation}{section}
\newtheorem{thm}{Theorem}[section]
\newtheorem{prop}[thm]{Proposition}
\newtheorem{cor}[thm]{Corollary}
\newtheorem{lem}[thm]{Lemma}
\theoremstyle{definition}
\newtheorem{defn}[thm]{Definition}
\newtheorem*{remark*}{Remark}
\newtheorem{remark}[thm]{Remark}
\newtheorem{thmABC}{Fact}
\newtheorem{proABC}[thmABC]{Problem}
\newcommand{\la}{{\lambda}}
\def \gb {{\mathbf g}}
\def \cb {{\mathbf c}}
\newcommand{\cH}{{\mathcal H}}
\newcommand{\cU}{{\mathcal U}}
\newcommand{\cV}{{\mathcal V}}
\newcommand{\cF}{{\mathcal F}}
\newcommand{\cL}{{\mathcal L}}
\newcommand{\cN}{{\mathcal N}}
\newcommand{\cR}{{\mathcal R}}
\newcommand{\cC}{{\mathcal C}}
\newcommand{\cT}{{\mathcal T}}
\newcommand{\cM}{{\mathcal M}}
\newcommand{\bQ}{{\mathbb Q}}
\newcommand{\bZ}{{\mathbb Z}}
\newcommand{\fo}{\mathfrak o}
\newcommand{\GL}{\text{GL}}
\newcommand{\glO}[2]{\text{GL}_{#1}({\mathfrak {o}}_{#2})}
\newcommand{\yab}{Y_{\iota(\la)}}
\newcommand{\pab}{\mathcal{P}_{\iota(\la)}}
\newcommand{\fab}{\mathcal{F}_{ \iota(\la)}}
\newcommand{\hab}{\mathcal{H}_{ \iota(\la) }}
\newcommand{\y}[1]{Y_{#1}}
\newcommand{\f}[1]{\mathcal{F}_{#1}}
\newcommand{\sab}{\mathcal{S}_{ \iota(\la)}}
\newcommand{\kk}{k}
\renewcommand{\gg}{G}
\newcommand{\gh}{\mathrm{GL}_n(\kk)}
\newcommand{\sstd}{\mathrm{sstd}}
\newcommand{\End}{\text{End}}
\newcommand{\Hom}{\text{Hom}}
\newcommand{\Gb}{\bold{G}}
\title[Murphy basis and an application]{Geometric Interpretation of Murphy \\ Bases and an Application}
\date{\today }
\author{Uri Onn} \address{Department of Mathematics, Ben Gurion
  University of the Negev, Beer-Sheva 84105 Israel}
  \email{urionn@math.bgu.ac.il}
\author[Pooja Singla]{Pooja Singla${}^\dag$}\thanks{${}^\dag$Supported by the Center for Advanced Studies in Mathematics at Ben Gurion University} \address{Department of Mathematics, Ben Gurion
  University of the Negev, Beer-Sheva 84105 Israel}
  \email{pooja@math.bgu.ac.il}
\subjclass[2000]{Primary 20G05; Secondary 20C08, 20C33}
\keywords{Representations of general linear groups, Murphy basis, Hecke algebra, Hecke Modules, Cellular algebra, RSK correspondence, Principal ideal local ring}
\begin{document}

\maketitle
\begin{abstract}
In this article we study the representations of general linear
groups which arise from their action on flag spaces. These
representations can be decomposed into irreducibles by proving
that the associated Hecke algebra is cellular. We give a geometric
interpretation of a cellular basis of such Hecke algebras which
was introduced by Murphy in the case of finite fields. We apply
these results to decompose representations which arise from the
space of submodules of a free module over principal ideal local
rings of length two with a finite residue field.


\end{abstract}

\section{Introduction}
\label{introduction}

\subsection{Flags of vector spaces}\label{field} Let $\kk$ be a finite field and let $n$ be a fixed
positive integer. Let $G=\GL_n(\kk)$ be the group of $n$-by-$n$
invertible matrices over $\kk$ and let $\Lambda_n$ stand for the
set of partitions of $n$. For $\lambda=(\la_i) \in \Lambda_n$,
written in a non-increasing order, let $l(\lambda)$ denote its
length, namely the number of non-zero parts. The set $\Lambda_n$
is a lattice under the opposite dominance partial order, defined
by: $\la \le \mu$ if $\sum_{j=1}^i\la_j \ge\sum_{j=1}^i\mu_j$ for
all $i \in \mathbb{N}$. Let $\vee$ and $\wedge$ denote the
operations of join and meet, respectively, in the lattice
$\Lambda_n$. We call a chain of $\kk$-vector spaces
$\kk^n=x_{l(\lambda)} \supset x_{l(\lambda)-1} \supset \cdots
\supset x_{0} = (0)$ a $\la$-flag if
$\dim_{\kk}(x_{l(\la)-i+1}/x_{l(\la)-i}) = \la_i$ for all $1 \le i
\le l(\la)$. Let
\[
X_\lambda=\{(x_{l(\lambda)-1},\cdots,x_{1}) \mid \kk^n =
x_{l(\lambda)} \supset
 \cdots \supset x_{0} = (0)~\text{is a
 $\lambda$-flag} \},
\]
be the set of all $\la$-flags in $\kk^n$. Let $\cF_\la$ be the
permutation representation of $G$ that arises from its action on
$X_{\la}$ ($\la \in \Lambda_n$). Specifically,
$\cF_{\la}=\bQ(X_\lambda)$ is the vector space of $\bQ$-valued
functions on $X_{\la}$ endowed with the natural $G$-action:
\[
\begin{split}
\rho_{\la}: G& \rightarrow \text{Aut}_{\bQ}(\cF_{\la}) \\
g&  \mapsto   [\rho_{\la}(g)f](x)  = f(g^{-1}x).
\end{split}
\]

Let $\cH_{\la} = \End_G(\cF_{\la})$ be the Hecke algebra
associated to $\cF_\la$. The algebra $\cH_\lambda$ captures the
numbers and multiplicities of the irreducible constituents in
$\cF_\lambda$. The notion of Cellular Algebra, to be described in
Section \ref{preliminaries}, was defined by Graham and Lehrer in
\cite{MR1376244}. Proving that the algebra $\cH_{\la}$ is cellular
gives, in particular, a classification of the irreducible
representations of $\cH_\la$ and hence also gives the
decomposition of $\cF_{\la}$ into irreducible constituents.
Murphy~\cite{MR1194316, MR1327362} gave a beautiful description of
a cellular basis of the Hecke algebras of type $A_n$ denoted
$\cH_{R, q}(S_n)$; cf. \cite{MR1711316}. For $q=|k|$ one has
$\cH_{(1,...,1)} \simeq \cH_{\mathbb Q, q}(S_n)$. Dipper and
James~\cite{MR812444} (see also \cite{MR1711316}) generalized this
basis and constructed cellular bases for the Hecke algebras
$\cH_{\la}$. The first result in this paper is a new construction
of this basis which is of geometric nature. More specifically, the
characteristic functions of the orbits of the diagonal $G$-action
on $X_\la \times X_\mu$ gives a basis of the Hecke modules
$\cN_{\mu\la}=\Hom_G(\cF_\la,\cF_\mu)$. For $\mu \leq \la$ we
allocate a subset of these orbits denoted $\cC_{\mu\la}$ such that
going over all the compositions $\cC^{\mathrm{op}}_{\mu\la} \circ
\cC_{\mu\la}$  and all $\mu \le \la$ gives the desired basis. The
benefit of this description turns out to be an application in the
following setting.

\subsection{Flags of $\mathfrak {o}$-modules}\label{ring}
Let $\mathfrak {o}$ be a complete discrete valuation ring. Let
$\mathfrak{p}$ be the unique maximal ideal of $\mathfrak {o}$ and
$\pi$ be a fixed uniformizer of $\mathfrak{p}$. Assume that the
residue field $ \kk = \mathfrak {o}/\mathfrak{p}$ is finite. The
typical examples of such rings are $\mathbf Z_{p}$ (the ring of
$p$-adic integers) and $\mathbf F_q [[t]]$ (the ring of formal
power series with coefficients over a finite field). We denote by
$\mathfrak {o}_{\ell}$ the reduction of $\mathfrak {o}$ modulo
$\mathfrak{p}^{\ell}$, i.e., $\mathfrak {o}_{\ell} = \mathfrak
{o}/\mathfrak{p}^{\ell}$. Since $\mathfrak {o}$ is a principal
ideal domain with a unique maximal ideal $\mathfrak{p}$, every
finite $\mathfrak {o}$-module is of the form
$\oplus_{i=1}^{j}\mathfrak {o}_{\la_{i}}$, where $\la_{i}$'s can
be arranged so that $\lambda = (\la_1,\ldots,\la_j) \in
\Lambda=\cup \Lambda_{n}$. Let $\glO{n}{\ell}$ denote the group of
$n$-by-$n$ invertible matrices over $\mathfrak {o}_{\ell}$. We are
interested in the following generalization of the discussion in
\S\ref{field}. Let
\[
\cL^{(r)}=\cL^{(r)}({\ell^n})=\{(x_r,\cdots,x_{1}) \mid \mathfrak
{o}_{\ell}^n \supset x_r \supset \cdots \supset x_{0} =
(0),~\text{$x_i$ are $\mathfrak {o}$-modules}\}
\]
be the space of flags of length $r$ of submodules in $\mathfrak
{o}_{\ell}^n$. Let $\Xi \subset \cL^{(r)}$ denote an orbit of the
$\glO{n}{\ell}$-action on $\cL^{(r)}$. Let
$\cF_\Xi=\mathbb{Q}(\Xi)$ be the corresponding permutation
representation of $\glO{n}{\ell}$. One is naturally led to the
following related problems:
\begin{proABC}\label{p1} Decompose $\cF_\Xi$ to irreducible
representations.
\end{proABC}

\begin{proABC}\label{p2} Find a cellular basis for the algebra
$\cH_\Xi=\End_{\glO{n}{\ell}}(\cF_\Xi)$.
\end{proABC}

Few other cases, beside the field case ($\ell=1$) which is our
motivating object, were treated in the literature. The
Grassmannian of free $\mathfrak{o}_\ell$-modules, i.e., the case
$r=1$ and $x_1 \simeq \mathfrak{o}_\ell^m$ is treated fully in
\cite{BO2, MR2283434}. The methods therein are foundational to the
present paper. Another case which at present admits a very partial
solution is the case of complete free flags in
$\mathfrak{o}_\ell^3$; cf. \cite{MR2504482}. In this paper we
treat the first case which is not free but we restrict ourselves
to level $2$, that is, we look at the Grassmannian of arbitrary
$\mathfrak{o}_2$-modules of type ($2^a1^b$) in $\mathfrak{o}_2^n$.
To solve this problem we are naturally led to consider certain
spaces of $2$-flags of $\mathfrak{o}_2$-modules as well. We give a
complete solution to problems \ref{p1} and \ref{p2} in these
cases.


\section{Preliminaries}\label{preliminaries}


\subsection{Hecke algebras and Hecke modules}
For $\la,\mu \in \Lambda_n$, we let
$\cN_{\la\mu}=\Hom_G(\cF_\mu,\cF_\la)$ denote the
$\cH_\la$-$\cH_\mu$-bimodule of intertwining $G$-maps from
$\cF_\mu$ to $\cF_\la$. The modules $\cN_{\la\mu}$, and in
particular the algebras $\cH_\lambda$, have natural \lq geometric
basis\rq~ indexed by $X_\lambda \times_G X_\mu$, the space of
$G$-orbits in $X_\lambda \times X_\mu$ with respect to the
diagonal $G$-action. Specifically, for $\Omega \in X_{\la}
\times_G X_{\mu}$, let
 \begin{equation}\label{geometric.basis}
 \gb_\Omega f (x)= \sum_{y:(x,y) \in \Omega} f(y), \qquad f \in
 \cF_\mu,\, x \in X_\la.
\end{equation}
Then $\{\gb_\Omega \mid \Omega \in X_\la \times _G X_\mu \}$ is a
basis of $\cN_{\la\mu}$. Let $\cM_{\la \mu}$ be the set of
$l(\la)$-by-$l(\mu)$ matrices having non-negative integer entries
with column sum equal to $\mu$ and row sum equal to $\la$, namely
\begin{equation}\label{intersection-matrix}
\cM_{\la \mu} = \{(a_{ij}) \mid a_{ij} \in \bZ_{\geq
0},\,\sum_{i=1}^{l(\la)} a_{ij} = \mu_j, \sum_{j=1}^{l(\mu)}
a_{ij} = \la_i \}.
\end{equation}
Geometrically, the orbits in $X_{\la} \times_G X_{\mu}$
characterize the relative positions of $\la$-flags and $\mu$-flags
in $\kk^n$ and hence are in bijective correspondence with the set
$\cM_{\la\mu}$. The bijection
\begin{equation}\label{orbits-matrices}
X_{\la} \times_G X_{\mu} \longleftrightarrow  \cM_{\la\mu},
\end{equation}
is obtained by mapping the pair $(x, y) \in X_{\la} \times X_{\mu}$ to
its intersection matrix $(a_{ij}) \in \cM_{\la\mu}$, defined by
\begin{equation}
\label{intersection matrix}
a_{ij} = \dim_{\kk}\left( \frac{x_i \cap y_j}{x_i \cap y_{j-1} + x_{i-1} \cap y_j}\right).
\end{equation}

\subsection{The RSK Correspondence}\label{subsec:Young} A Young diagram of a partition $\mu \in \Lambda_n$ is the set
$[\mu] = \{(i,j) \mid 1 \leq j \leq \mu_i \, \mathrm{and} \, 1 \le
i \le l(\mu) \} \subset \mathbb N \times \mathbb N$. One usually
represent it by an array of boxes in the plane, e.g.\ if $\mu =
(3,2)$ then $[\mu] = {\tiny \yng(3,2)}$. A $\mu$-tableau $\Theta$
is a labeling of the boxes of $[\mu]$ by natural numbers. The
partition $\mu$ is called the shape of $\Theta$ and denoted
$\mathrm{Shape}(\Theta)$. A Young tableau is called {\it
semistandard} if its entries are increasing in rows from left to
right and are strictly increasing in columns from top to bottom. A
semistandard tableau of shape $\mu$ with $\sum \mu_i = n$ is
called {\it standard} if its entries are integers from the set
$\{1, 2, \ldots, n\}$, each appearing exactly once and strictly
increasing from left to right as well. Given partitions $\nu$ and
$\mu$, a tableau $\Theta$ is called of \lq shape $\nu$ and type
$\mu$\rq~ if it is of shape $\nu$ and each natural number $i$
occurs exactly $\mu_i$ times in its labeling. We denote by
$\mathrm{std}(\nu)$ and $\sstd(\nu\mu)$ the set of all standard
$\nu$-tableaux and set of semistandard $\nu$-tableaux of type
$\mu$, respectively. We remark that the set $\sstd(\nu\mu)$ is
nonempty if and only if $\nu \le \mu$.

\smallskip
The RSK correspondence is an algorithm which explicitly defines a
bijection
\[
\cM_{\la \mu} \longleftrightarrow \bigsqcup_{\nu \le \la \wedge
\mu } \sstd(\nu \la) \times \sstd(\nu \mu),
\]
where $\la \wedge \mu$ is the meet of $\la$ and $\mu$. For more
details on this see \cite{MR0272654}.

\begin{defn}
\label{Embedding of flags} We say that a $\nu$-flag $y$ is {\em
embedded} in a $\mu$-flag $x$, denoted $y \hookrightarrow x$, if
$l(\nu) \leq l(\mu)$ and $y_{l(\nu)-i} \subset x_{l(\mu)-i}$ for
all $1\leq i \leq l(\nu)$. The intersection matrix of each
embedding of $\nu$-flag into $\mu$-flag determines a $\nu$-tableau
of type $\mu$ as follows: for any intersection matrix $E =
(a_{ij}) \in \cM_{\nu\la}$, construct the Young tableau with
$a_{ij}$ many $i$'s in its $j^{\mathrm{th}}$ row. We call an
embedding of $\nu$-flag into a $\mu$-flag {\em permissible} if the
 $\nu$-tableau obtained is semistandard. The set
$\cM_{\nu\la}^{\circ}$ denotes the subset of $\cM_{\nu\la}$
consisting of intersection matrices that corresponds to
permissible embeddings.
\end{defn}

The following gives a reformulation of the RSK correspondence purely in
terms of intersection matrices:
\begin{equation}\label{the.RSK.in.terms.of.matrices}
\cM_{\la\mu} \longleftrightarrow \bigsqcup_{\nu \le \la \wedge
\mu} \cM_{\nu \la}^\circ \times \cM_{\nu \mu}^\circ.
\end{equation}

For partitions $\nu \le \la$ of $n$ we let $(X_\nu \times
X_\la)^\circ$ denote the subset of $X_\nu \times X_\la$ which
consists of pairs $(z,x)$ such that $z$ is permissibly embedded in
$x$. The orbits $(X_\nu \times_G X_\la)^\circ$ are therefore
parameterized by $\cM_{\nu \la}^\circ$. This gives a purely
geometric reformulation of the RSK correspondence:
\begin{equation}\label{geometric.RSK}
X_{\la} \times_G X_{\mu} \tilde{\longrightarrow}
\bigsqcup_{\nu \le \la \wedge \mu} \left(X_\nu \times_G
X_\la\right)^\circ \times \left(X_\nu \times_G X_\mu\right)^\circ.
\end{equation}
The gist of \eqref{geometric.RSK} is that both sides have
geometric interpretations. We remark that the above bijection is
an important reason behind the cellularity of MDJ basis (see
Section~\ref{M-DJ bases}).


\subsection{Cellular Algebras}

\label{Cellularity} Cellular algebras were defined by Graham and
Lehrer in \cite{MR2283434}. We use the following equivalent
formulation from Mathas~\cite{MR1711316}.

\begin{defn}
\label{cellular algebra} Let $K$ be a field and let $A$ be an
associative unital $K$-algebra. Suppose that $(\zeta, \geq)$ is a
finite poset and that for each $\tau \in \zeta$ there exists a
finite set $\mathcal{T}(\tau)$ and elements $c_{st}^{\tau} \in A$
for all $s, t \in \mathcal{T}(\tau)$ such that $\mathcal{C} = \{
c_{st}^{\tau} \mid \tau \in \zeta \,\,\text{and}\,\,s, t \in
\mathcal{T}(\tau) \}$ is a basis of $A$. For each $\tau \in \zeta$
let $\tilde{A}^{\tau}=\mathrm{Span}_K \{c_{uv}^{\omega} \mid
\omega \in \zeta,\,\, \omega > \tau \,\,\text{and} \,\,u, v \in
\mathcal{T}(\omega) \}$. The pair $(\mathcal{C}, \zeta)$ is a
cellular basis of $A$ if
\begin{enumerate}
\item The $K$-linear map $\star : A \rightarrow A$ determined by $c_{st}^{\tau \star} = c_{ts}^{\tau}$ ($\tau \in \zeta, s,t \in \mathcal{T}(\tau)$)
is an algebra anti-homomorphism of $A$; and,
\item for any $\tau \in \zeta$, $t \in \mathcal{T}(\tau)$ and $a \in A$ there exists $\{\alpha_{v} \in K \mid v \in \cT(\tau)\}$ such that for all
$s \in \mathcal{T}(\tau)$
\begin{equation}
\label{cellular condition}
a \cdot c_{st}^{\tau} = \sum_{v \in \cT(\tau)} \alpha_{v}c_{vt}^{\tau}
\,\,\text{mod}\, \tilde{A}^{\tau}.
\end{equation}
\end{enumerate}
If $A$ has a cellular basis then $A$ is called a cellular algebra.
\end{defn}

The result about semisimple cellular algebras which we shall need
is the following. Let $A$ be a semisimple cellular algebra with a
fixed cellular basis $(\mathcal C = \{c_{st}^{\tau} \}, \zeta)$.
For $\tau \in \zeta$ let $A^{\tau}$ be the $K$-vector space with
basis $\{ c_{uv}^{\mu} \mid \mu \in \zeta, \mu \geq \tau
~\mathrm{and}~ u,v \in \mathcal T(\mu)\}$. Thus $\tilde A^{\tau}
\subset A^{\tau}$ and $A^{\tau} / \tilde A^{\tau}$ has basis
$c_{st}^{\tau} + \tilde A^{\tau}$ where $s, t \in
\mathcal{T}(\tau)$. It is easy to prove that $A^{\tau}$ and
$\tilde A^{\tau}$ are two sided ideals of $A$. Further if $s,t \in
\cT(\tau)$, then there exists an element $\alpha_{st} \in K$ such that
for any $u, v \in \cT(\tau)$
\[
c_{us}^{\tau} c_{tv}^{\tau} = \alpha_{st} c_{uv}^{\tau} \,\,
\mathrm{mod} \,\, \tilde A^{\tau}.
\]
For each $\tau \in \zeta$ the cell modules $C^{\tau}$ is defined
as the left $A$ module with $K$-basis $\{ b_{t}^{\tau} \mid t \in
\mathcal {T}(\tau) \}$ and with the left $A$ action:
\[
a \cdot b_{t}^{\tau} = \sum_{v \in \mathcal{T}(\la) } \alpha_{v}
b_{v}^{\tau},
\]
for all $a \in A$ and $\alpha_{v}$ are as given in the
Definition~\ref{cellular algebra}. Furthermore, dual to $C^{\tau}$
there exists a right $A$-modules $C^{\tau *}$ which has the same
dimension over $K$ as $C^{\tau}$, such that the $A$-modules
$C^{\tau } \otimes_{K} C^{\tau *}$ and $A^{\tau}/\tilde{A}^{\tau}$
are canonically isomorphic.
\begin{thm} \cite[Lemma~2.2 and Theorem~3.8]{MR2283434} Suppose $\zeta$ is finite. Then $\{ C^{\tau} \, | \, \tau \in \zeta \, \mathrm{and} \, C^{\tau} \neq 0 \}$ is a complete set of pairwise
inequivalent irreducible $A$-modules. Let $\zeta^{+}$ be the set
of elements $\tau \in \zeta$ such that $C^{\tau} \neq 0$. Then $A
\cong \oplus_{\tau \in \zeta^{+}} C^{\tau } \otimes_{K} C^{\tau
* }$.
\end{thm}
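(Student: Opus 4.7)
The plan is to combine the filtration of $A$ by the two-sided ideals $\tilde A^\tau \subset A^\tau$ indexed by $(\zeta,\geq)$, the canonical isomorphism $A^\tau/\tilde A^\tau \cong C^\tau \otimes_K C^{\tau*}$ recorded just before the theorem, and an invariant bilinear form on each cell module. Concretely, I would define on each $C^\tau$ a symmetric bilinear form $\phi^\tau \colon C^\tau \times C^\tau \to K$ by $\phi^\tau(b_s^\tau, b_t^\tau) = \alpha_{st}$, where $\alpha_{st}$ is the scalar appearing in the relation $c_{us}^\tau c_{tv}^\tau \equiv \alpha_{st} c_{uv}^\tau \pmod{\tilde A^\tau}$. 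Using condition~(2) of Definition~\ref{cellular algebra} together with the anti-involution $\star$, one verifies that $\phi^\tau$ is well defined, symmetric, and $A$-associative in the sense that $\phi^\tau(a\cdot x, y) = \phi^\tau(x, a^\star\cdot y)$; consequently $\mathrm{rad}(\phi^\tau)$ is an $A$-submodule of $C^\tau$, and a standard cellular computation shows that $D^\tau := C^\tau/\mathrm{rad}(\phi^\tau)$ is either zero or absolutely irreducible, since any nonzero vector in the quotient is cyclic by the defining product formula.

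Next I would invoke semisimplicity of $A$. By complete reducibility, $C^\tau$ splits as $\mathrm{rad}(\phi^\tau) \oplus D^\tau$. If $\phi^\tau \equiv 0$ then $A^\tau/\tilde A^\tau$ would square to zero inside the quotient, producing a nilpotent ideal in $A$ and contradicting semisimplicity; hence $\phi^\tau \not\equiv 0$ for every $\tau \in \zeta^+$. Combined with the irreducibility of the top, this forces $\mathrm{rad}(\phi^\tau) = 0$, so every $C^\tau$ with $\tau \in \zeta^+$ is already irreducible. Pairwise inequivalence follows from a leading-term argument applied to the filtration: $\tilde A^\tau$ annihilates $C^\tau$, whereas a suitable representative $c_{uv}^{\tau'} \in \tilde A^\tau$ with $\tau'>\tau$ acts on $C^{\tau'}$ by a nonzero scalar via the form $\phi^{\tau'}$, so the annihilators distinguish the cell modules.

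Finally, a dimension count completes the proof. The filtration gives, as $K$-vector spaces,
\[
\dim_K A \;=\; \sum_{\tau \in \zeta} \dim_K(A^\tau/\tilde A^\tau) \;=\; \sum_{\tau \in \zeta^+} (\dim_K C^\tau)^2,
\]
while semisimplicity of $A$ independently yields $\dim_K A = \sum_{[V]}(\dim_K V)^2$, where $[V]$ runs over isomorphism classes of simple $A$-modules. Since the $\{C^\tau\}_{\tau \in \zeta^+}$ are already pairwise inequivalent simples, the list must exhaust all simples, and stitching together the pieces $A^\tau/\tilde A^\tau \cong C^\tau \otimes_K C^{\tau*}$ produces $A \cong \bigoplus_{\tau \in \zeta^+} C^\tau \otimes_K C^{\tau*}$. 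I expect the main obstacle to be verifying the associativity of $\phi^\tau$ and the inequivalence step: both rest on careful bookkeeping of the products $c_{us}^\tau c_{tv}^\tau$ modulo $\tilde A^\tau$ and the interaction with $\star$, which is essentially the content of Lemma~2.2 of \cite{MR2283434}.
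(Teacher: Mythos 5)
The paper itself does not prove this statement: it is quoted verbatim from the cited reference (\cite{MR2283434}, ultimately the Graham--Lehrer/Mathas theory), so there is no in-text argument to compare against. Your outline is exactly the standard proof from those sources -- the invariant form $\phi^\tau$, the observation that $(A^\tau)^2\subseteq\tilde A^\tau$ when $\phi^\tau\equiv 0$ (note the resulting square-zero ideal lives in the quotient $A/\tilde A^\tau$, which is semisimple as a quotient of $A$, not in $A$ itself), cyclicity of vectors outside $\mathrm{rad}(\phi^\tau)$, and a dimension count against Wedderburn -- and the architecture is correct. One small compression: ``rad $=0$'' does not follow from irreducibility of the top alone, but from the cyclicity you already quoted: a complement to $\mathrm{rad}(\phi^\tau)$ supplied by complete reducibility contains a vector outside the radical, which generates all of $C^\tau$, so the radical must vanish.

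Two steps, however, have genuine gaps as written. First, your inequivalence argument separates $C^\tau$ from $C^{\tau'}$ only when $\tau'>\tau$, since it rests on picking $c^{\tau'}_{uv}\in\tilde A^\tau$; the posets occurring here (e.g.\ $\Lambda_n$ with the dominance order) contain incomparable pairs, for which $c^{\tau'}_{uv}\notin\tilde A^\tau$ and the annihilator comparison as stated says nothing. The standard repair: if some $c^{\tau'}_{st}$ acts nonzero on $C^\tau$, then $c^{\tau'}_{st}c^{\tau}_{uv}$ has a nonzero component in the $\tau$-layer while lying in the two-sided ideal $A^{\tau'}=\mathrm{Span}\{c^{\omega}_{xy}\mid\omega\geq\tau'\}$, and uniqueness of the basis expansion forces $\tau\geq\tau'$; applying this in both directions to an isomorphism $C^\tau\cong C^{\tau'}$ yields $\tau=\tau'$, which also covers incomparable pairs. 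Second, the identity $\dim_K A=\sum_{[V]}(\dim_K V)^2$ does not follow from semisimplicity alone -- it fails for non-split algebras such as the quaternions over $\mathbb{R}$ -- so you cannot invoke it to conclude that the $C^\tau$ exhaust the simples. Instead, use the absolute irreducibility you established: $\mathrm{End}_A(C^\tau)=K$, so the Wedderburn blocks attached to the pairwise inequivalent simples $C^\tau$ contribute exactly $\sum_{\tau\in\zeta^+}(\dim_K C^\tau)^2=\dim_K A$, leaving no room for further blocks; this gives both the completeness of the list and $A\cong\oplus_{\tau\in\zeta^+}\mathrm{End}_K(C^\tau)\cong\oplus_{\tau\in\zeta^+}C^\tau\otimes_K C^{\tau*}$.
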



\section{Another description of Murphy-Dipper-James bases}


\subsection{MDJ Bases}
\label{M-DJ bases} For a positive integer $n$, let $S_n$ be the
symmetric group of $\{ 1,2, \ldots, n\}$. Let $S$ be the subset of
$S_n$ consisting of the transpositions $(i, i+1)$. Let $R$ be a
commutative integral domain and let $q$ be an arbitrary element of
$R$. The Iwahori-Hecke algebra $\mathcal H_{R, q}(S_n)$ is the
free $R$-module generated by $\{ T_{\omega} \mid \omega \in S_n
\}$ with multiplication given by
\[T_{w} T_s =
  \begin{cases}
    T_{ws} & \text{if} \;\; \ell(ws) > \ell(w), \\
    qT_{ws} + (q-1)T_w & \text{otherwise},
  \end{cases}
\]
where $\ell(w)$ denotes the length of $w \in S_n$. Also $\star: \mathcal H_{R, q}(S_n) \rightarrow \mathcal H_{R, q}(S_n)$ denotes an algebra anti-involution defined by $T^{\star}_{\omega} = T_{\omega^{-1}}$. For a partition $\mu$, let $S_{\mu}$ be the subset of $S_n$ consisting of all permutations leaving the sets $\{\sum_{i=1}^{j-1} \mu_i +1,....,\sum_{i=1}^{j} \mu_i \}$ invariant for all $1 \leq j \leq l(\mu)$ and let $m_{\mu} = \sum_{\omega \in S_{\mu}} T_{\omega}$. Let $\cN_{\la\mu}^q$ denote the free $R$-module $m_{\la} \cH_{R, q}(S_n) m_{\mu}$.  \\

For each partition $\nu$ of $n$, let $\phi^{\nu}$ be the unique $\nu$-standard tableau
in which the integers $\{1, 2, \ldots, n\}$ are entered in increasing order from left to right
along the rows of $[\nu]$. For each $\nu$-standard tableau $\theta$ define the permutation
matrix $d(\theta)$ by $\theta = \phi^{\nu} d(\theta)$.
For any standard $\nu$-tableau $\theta$ and partition $\mu$ of $n$ such that $\nu \leq \mu$, we obtain a semistandard $\nu$-tableau of type $\mu$, denoted $\mu(\theta)$, by replacing each entry $i$ in $\theta$ by $r$ if $i$ appears in row $r$ of $\phi^{\mu}$.
For given partitions $\mu$ and $\nu$, let $\Theta_1 \in \mathrm{sstd}(\nu, \la)$ and $\Theta_2 \in \mathrm{sstd}(\nu, \mu)$, define
\[
m_{\Theta_1 \Theta_2} = \sum_{\theta_1, \theta_2} m_{\theta_1 \theta_2},
\]
where $m_{\theta_1 \theta_2} = T^{\star}_{d(\theta_1)} m_{\nu}
T_{d(\theta_2)}$ and the sum is over all pairs
$(\theta_1,\theta_2)$ of standard $\nu$-tableau such that
$\la(\theta_1) = \Theta_1$ and $\mu(\theta_2) = \Theta_2$. Let
\[\mathbb M_{\mu\la}  = \{m_{\Theta_1 \Theta_2} \mid \Theta_1 \in
\mathrm{sstd}(\nu, \la), \Theta_2 \in \mathrm{sstd}(\nu, \mu), \nu
\leq \la \wedge \mu  \},\] and for any partition $\mu \in
\Lambda_n$, let $\Lambda_{\mu}=\{ \nu \in \Lambda_n \mid \nu \le
\mu\}$. Then
\begin{thm}[Murphy, Dipper-James]
\label{thm:M-DJ} The set $(\mathbb M_{\mu\la}, \Lambda_{\la \wedge
\mu})$ is an $R$-basis of the Hecke module $\cN^q_{\mu\la}$.
\end{thm}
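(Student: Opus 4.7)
The plan is to derive Theorem~\ref{thm:M-DJ} from the cellularity of the full Iwahori-Hecke algebra $\cH_{R,q}(S_n)$ with respect to the Murphy basis $\{m_{st}^\nu = T^\star_{d(s)}m_\nu T_{d(t)} : \nu \in \Lambda_n,\, s,t \in \mathrm{std}(\nu)\}$, which is the special case $\la=\mu=(1^n)$ of the theorem. A cardinality count is the starting point. The free $R$-module $\cN^q_{\mu\la}$ admits an $R$-basis indexed by the minimal-length $(S_\la,S_\mu)$-double coset representatives in $S_n$, and hence has rank $|\cM_{\la\mu}|$ by the standard correspondence between intersection matrices and double cosets. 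The RSK reformulation~\eqref{the.RSK.in.terms.of.matrices} then yields
\[
\mathrm{rank}_R\,\cN^q_{\mu\la}=|\cM_{\la\mu}|=\sum_{\nu \le \la\wedge\mu}|\cM_{\nu\la}^{\circ}|\cdot|\cM_{\nu\mu}^{\circ}|=\sum_{\nu \le \la\wedge\mu}|\mathrm{sstd}(\nu,\la)|\cdot|\mathrm{sstd}(\nu,\mu)|=|\mathbb M_{\mu\la}|.
\]
Consequently it suffices to show that $\mathbb M_{\mu\la}$ spans $\cN^q_{\mu\la}$ over $R$.

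For the spanning step I would induct downward on $\nu$ in the opposite dominance order, using the two-sided-ideal filtration $\cH^{>\nu}=\mathrm{Span}_R\{m_{pq}^\omega : \omega>\nu\}$ supplied by cellularity of the Murphy basis. An arbitrary element of $\cN^q_{\mu\la}$ has the form $m_\la h m_\mu$ with $h \in \cH_{R,q}(S_n)$, and expanding $h$ in the Murphy basis reduces the problem to analysing $m_\la m_{st}^\nu m_\mu=(m_\la T^\star_{d(s)})\,m_\nu\,(T_{d(t)}m_\mu)$ modulo $\cH^{>\nu}$. The central lemma is a Garnir-type reduction: for any standard $\nu$-tableau $s$,
\[
m_\la T^\star_{d(s)}m_\nu \;\equiv\; c_s\sum_{s':\la(s')=\la(s)}T^\star_{d(s')}m_\nu \pmod{\cH^{>\nu}},
\]
where $c_s\in R$ is an appropriate product of powers of $q$, and the right-hand side is to be read as zero unless $\la(s)$ is semistandard (in particular, forcing $\nu\le\la$); a symmetric identity holds on the right for $m_\nu T_{d(t)}m_\mu$. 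Sandwiching these, $m_\la m_{st}^\nu m_\mu$ is a scalar multiple of $m_{\la(s)\mu(t)}$ modulo $\cH^{>\nu}$ whenever $\la(s)\in\mathrm{sstd}(\nu,\la)$ and $\mu(t)\in\mathrm{sstd}(\nu,\mu)$ (so $\nu\le\la\wedge\mu$), and lies in $\cH^{>\nu}$ otherwise. The downward induction on $\nu$ then expresses $m_\la h m_\mu$ as an $R$-linear combination of the $m_{\Theta_1\Theta_2}$, and terminates because $\Lambda_n$ is finite.

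The main obstacle is the Garnir-type reduction lemma. Its proof rests on two combinatorial ingredients: first, after fully expanding $m_\la=\sum_{w\in S_\la}T_w$ and collecting terms along the $S_\la$-orbits of standard $\nu$-tableaux (two standard tableaux being $S_\la$-equivalent precisely when they share the same $\la$-type), the Jucys-Murphy straightening relations reorganise the expansion into a sum indexed by tableaux $s'$ with $\la(s')=\la(s)$, with an explicit common scalar $c_s$; second, a row-column column-repetition argument shows that whenever $\la(s)$ has two equal entries in the same column, $T^\star_{d(s)}m_\nu$ already lies in $\cH^{>\nu}$, so only semistandard $\la(s)$ contributes modulo the higher-shape ideal. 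Once this lemma is in hand, the spanning statement combined with the cardinality match upgrades $\mathbb M_{\mu\la}$ to a free $R$-basis of $\cN^q_{\mu\la}$, yielding the theorem.
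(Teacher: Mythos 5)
Your overall strategy (match the rank of $\cN^q_{\mu\la}$ against $|\mathbb M_{\mu\la}|$ via double cosets and RSK, then prove spanning by straightening against the Murphy cell filtration) is in fact the strategy of the result the paper simply cites -- the paper's own ``proof'' is the reference to Mathas, Theorem~4.10 and Corollary~4.12 -- so the issue is not the route but whether your reconstruction stands on its own. It does not: the central ``Garnir-type reduction'' lemma is false as stated, and it is exactly the hard part of Murphy/Dipper--James. Take $n=3$, $\la=\mu=\nu=(2,1)$, and let $s$ be the standard $\nu$-tableau with rows $\{1,3\}$ and $\{2\}$, so $d(s)=(2\,3)$ and $\la(s)$ has two $1$'s in its first column, hence is not semistandard. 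A direct computation gives $m_\la T^\star_{d(s)}m_\nu=(1+T_{(12)})\,T_{(23)}\,(1+T_{(12)})=T_{(23)}+T_{(12)(23)}+T_{(23)(12)}+T_{(13)}=m_{(3)}-m_{(2,1)}$. The Murphy cell ideal attached to shape $(2,1)$ is spanned by the single more dominant shape, i.e.\ it is $R\,m_{(3)}$ (and with the paper's opposite-dominance reading of $>\nu$ it would be spanned by the shape-$(1,1,1)$ element $1$); in either reading $m_{(3)}-m_{(2,1)}$ does not lie in $\cH^{>\nu}$, so your claim that the left-hand side vanishes modulo $\cH^{>\nu}$ when $\la(s)$ is not semistandard is wrong. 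The auxiliary claim that a column repetition in $\la(s)$ already forces $T^\star_{d(s)}m_\nu\in\cH^{>\nu}$ fails on the same example: $T_{(23)}(1+T_{(12)})=T_{(23)}+T_{(23)(12)}$ is not in the span of the other-shape Murphy elements. What is actually true, and what Murphy/Dipper--James/Mathas prove, is a \emph{triangularity}: a non-semistandard type contribution re-expresses, modulo strictly more dominant shapes, as a nonzero combination of terms of the \emph{same} shape whose (semistandard) type tableaux strictly dominate $\la(s)$ (here $m_\la T^\star_{d(s)}m_\nu\equiv -\,m_{(2,1)}$), so the induction has to run over this finer order on pairs (shape, type tableau), not over shapes alone, and the coefficients are $q$-integers such as $1+q$ rather than monomials. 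Since your spanning argument rests entirely on the misstated lemma, the proof as proposed does not close.

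Two further points of care. First, watch the direction of the order: with the paper's opposite dominance convention, ``$\omega>\nu$'' means $\omega$ is \emph{less} dominant, whereas the Murphy cell ideals are spanned by \emph{more} dominant shapes; as written, your filtration $\cH^{>\nu}$ is not the ideal you need. Second, your opening assertion that $\cN^q_{\mu\la}$ has the double coset sums as an $R$-basis, and implicitly that $\mathbb M_{\mu\la}\subset\cN^q_{\mu\la}$, is only correct for the intersection module $m_\mu\cH_{R,q}(S_n)\cap\cH_{R,q}(S_n)m_\la$ (which is what the cited results concern), not for the product $m_\mu\cH_{R,q}(S_n)m_\la$ over arbitrary $R$ and $q$: already for $n=2$, $\la=\mu=(2)$ and $R=\mathbb Z[q]$ one has $m_{(2)}\cH m_{(2)}=(1+q)R\,m_{(2)}$, which does not contain $m_{(2)}\in\mathbb M$. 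This is partly an imprecision of the paper's statement, and it is harmless in the paper's application ($R=\mathbb Q$, $q=|\kk|$), but your ``spanning plus rank count implies basis'' step needs either the intersection module or the invertibility of the relevant $q$-integers to be legitimate.
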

\begin{proof} See Mathas~\cite[Theorem~4.10, Corollary~4.12]{MR1711316}.
\end{proof}
\begin{remark}
\label{rk:M-DJ} The following observation from the proof is important for us.
For any semistandard $\nu$-tableau $\Theta$, let $\mathrm{first} (\Theta)$ be the unique row standard $\nu$-tableau such that $\la(\mathrm{first}(\Theta)) = \Theta$.
For $\Theta \in \mathrm{sstd}(\nu, \la)$,
\begin{equation}
\label{remark from mathas}
\Gb_{\Theta} := \sum_{\theta \in \mathrm{std}(\nu), \atop \la(\theta) = \Theta} m_{\nu}^{\star} T_{d(\theta)} = \sum_{\omega \in S_{\nu} \sigma S_{\la}} T_{\omega},
\end{equation}
where $\sigma \in S_n$ is the unique permutation matrix satisfying $\sigma = d(\mathrm{first}(\Theta))$ (see also the Remark~\ref{echelon form}).
\end{remark}
Any partition $\delta = (\delta_i)$ associates $l(\delta)$ many $\delta$-row ($\delta$-column) submatrices with a given $n \times n$ matrix $A$ by taking its rows (columns) from $\sum_{i=0}^j \la_i +1$ to $\sum_{i=0}^j \la_{i+1}$ for all $0 \leq j \leq l(\delta)-1$.
\begin{defn}($\la\mu$-Echelon form)
A matrix $A$ is called in $\la \mu $-Echelon form if its associated $\la$-row and $\mu$-column sub-matrices are in row reduced and column reduced Echelon form respectively.
\end{defn}
\begin{remark}
\label{echelon form}
The matrices $\sigma_1$ and $\sigma_2$ appearing in the proof of Theorem~\ref{thm:M-DJ} and Remark~\ref{rk:M-DJ} are in $\la\nu$ and $\mu\nu$
Echelon form respectively.
\end{remark}

\subsection{Geometric interpretation of the MDJ Bases} Recall RSK correspondence is an algorithm that explicitly defines the correspondence:
\[
\xymatrix{
\cM_{\la\mu}   \ar@{<->}[r] \ar@{<->}[d] & \bigsqcup_{\nu \in \Lambda_{\la \wedge \mu}} (X_{\nu} \times_G X_{\la})^{\circ} \times (X_{\nu} \times_G X_{\mu})^{\circ} \ar@{<->}[d] \\
\bigsqcup_{\nu \in \Lambda_{\la \wedge \mu}} \cM_{\nu\la}^{\circ} \times \cM_{\nu\mu}^{\circ} \ar@{<->}[r] & \bigsqcup_{\nu \in \Lambda_{\la \wedge \mu}} \mathrm{sstd}(\nu, \la) \times \mathrm{sstd}(\nu, \mu), }
\]
where the upper left corner consists of intersection matrices, the
upper right consists of orbits of permissible embeddings, the
lower left corner consists of intersection matrices of permissible
embedding and the lower right of semistandard tableaux. For $\nu
\in \Lambda_{\la \wedge \mu}$ and orbits $ \Omega_1 \in
\left(X_\nu \times_G X_\la\right)^\circ$, $\Omega_2 \in
\left(X_{\nu} \times_G X_{\mu} \right)^{\circ}$ define
\[
\cb^\nu_{\Omega_1\Omega_2}:=\gb_{\Omega_1^{\text{op}}} \circ
\gb_{\Omega_2},
\]
where $[(x,y)]^{\text{op}}=[(y,x)]$. Clearly,
$\cb^\nu_{\Omega_1\Omega_2} \in \cN_{\la\mu}$. For any partition
$\nu$, let $P_{\nu}$ be the stabilizer of standard $\nu$-flag in
$G$ and $B$ be the Borel subgroup of $G$, that is the subgroup
consisting of all invertible upper triangular matrices. Let
$\cC_{\mu \la} = \{\cb_{\Omega_1\Omega_2}^\nu \mid \nu \in
\Lambda_{\la \wedge \mu},\,\,\Omega_1 \in \left(X_\nu \times_G
X_\la\right)^\circ, \Omega_2 \in (X_{\nu} \times_G
X_{\mu})^{\circ} \}$.
\begin{thm}
\label{thm:Murphy=our}
The set $(\cC_{\mu\la}, \Lambda_{\la \wedge \mu})$ is a $\mathbb Q$-basis of $\cN_{\la\mu}$.
\end{thm}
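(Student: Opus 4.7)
The plan is to deduce Theorem~\ref{thm:Murphy=our} from the Murphy--Dipper--James Theorem~\ref{thm:M-DJ} by exhibiting each element of $\cC_{\mu\la}$ as a nonzero scalar multiple of an element of $\mathbb{M}_{\mu\la}$ under a canonical isomorphism $\Phi:\cN^q_{\la\mu}\xrightarrow{\sim}\cN_{\la\mu}$ with $q=|\kk|$. The isomorphism $\Phi$ comes from the standard Bruhat--Iwahori identification: $\cF_\la=\bQ[G/P_\la]$ where $P_\la$ stabilizes the standard $\la$-flag, so $\cN_{\la\mu}\cong\bQ[P_\la\backslash G/P_\mu]$ with the geometric basis $\{\gb_\Omega\}$ corresponding to characteristic functions of double cosets; meanwhile, $\cH_{\bQ,q}(S_n)\cong\End_G(\cF_{(1,\ldots,1)})$ sends $T_\omega$ to (a normalization of) the indicator of $B\omega B$, and $m_\la$ to a scalar multiple of the indicator of $P_\la/B$. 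These assemble into $\Phi$, under which the anti-involution $\star$ corresponds to the operation $\Omega\mapsto\Omega^{\text{op}}$ and algebra multiplication corresponds to composition of intertwiners. As a consistency check, the geometric RSK~\eqref{geometric.RSK} already forces $|\cC_{\mu\la}|=|X_\la\times_G X_\mu|=\dim_\bQ\cN_{\la\mu}$, so the content of the theorem is really linear independence.

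The key technical step is to show that $\Phi(\Gb_\Theta)$ is a nonzero scalar multiple of $\gb_{\Omega_\Theta}$, where $\Theta\in\sstd(\nu,\la)$ and $\Omega_\Theta\in(X_\nu\times_G X_\la)^\circ$ is the permissible orbit matched to $\Theta$ under the RSK bijection~\eqref{geometric.RSK}. By Remark~\ref{rk:M-DJ} together with Remark~\ref{echelon form}, we have $\Gb_\Theta=\sum_{\omega\in S_\nu\sigma S_\la}T_\omega$ with $\sigma$ in $\la\nu$-Echelon form, so $\Phi(\Gb_\Theta)$ is (up to scalar) the indicator of the double coset $P_\nu\sigma P_\la$, hence of a single $G$-orbit in $X_\nu\times X_\la$. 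The task is then to verify that the $\la\nu$-Echelon normalization of $\sigma$ corresponds precisely to the permissibility condition of Definition~\ref{Embedding of flags}, and that the intersection matrix read off from this orbit via~\eqref{intersection matrix} reproduces the tableau $\Theta$. This is the main obstacle: matching the algebraic Echelon-form double coset representative with the combinatorial semistandard tableau via the intersection-matrix dictionary, which amounts to a careful translation of the two formulations of the RSK correspondence.

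Granted this identification, the rest is formal. Using the relation $m_\nu^2=c_\nu\cdot m_\nu$ with $c_\nu=\sum_{\omega\in S_\nu}q^{\ell(\omega)}$ a positive integer (hence nonzero in $\bQ$), and the identity $m_\nu=m_\nu^\star$, one computes
\[
\Gb_{\Theta_1}^\star\cdot\Gb_{\Theta_2}=\sum_{\theta_1,\theta_2}T^\star_{d(\theta_1)}\,m_\nu^2\,T_{d(\theta_2)}=c_\nu\cdot m_{\Theta_1\Theta_2},
\]
so $\Phi(m_{\Theta_1\Theta_2})$ is a nonzero scalar multiple of $\gb_{\Omega_1^{\text{op}}}\circ\gb_{\Omega_2}=\cb^\nu_{\Omega_1\Omega_2}$ for $\Omega_i=\Omega_{\Theta_i}$. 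Since $(\mathbb{M}_{\mu\la},\Lambda_{\la\wedge\mu})$ is a $\bQ$-basis of $\cN^q_{\la\mu}$ by Theorem~\ref{thm:M-DJ} and $\Phi$ is a linear isomorphism, the collection $\cC_{\mu\la}$ is obtained from $\Phi(\mathbb{M}_{\mu\la})$ by rescaling each element by a nonzero constant, and is therefore a $\bQ$-basis of $\cN_{\la\mu}$.
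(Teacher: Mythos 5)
Your proposal is correct and takes essentially the same route as the paper: reduce to Theorem~\ref{thm:M-DJ} through the identification $\cH_{\bQ, q}(S_n) \cong \bQ[B\backslash G/B]$ (under which $\star$ becomes $\mathrm{op}$ and multiplication becomes composition of intertwiners), use $m_\nu^2=\bigl(\sum_{w \in S_\nu} q^{\ell(w)}\bigr) m_\nu$ to get $\Gb_{\Theta_1}^{\star}\Gb_{\Theta_2}=\frac{|P_\nu|}{|B|}\, m_{\Theta_1\Theta_2}$, and match $\Gb_{\Theta}$ with $\gb_{\Omega_\Theta}$. The step you single out as the main obstacle is exactly where the paper's proof does its work: since $\sigma=d(\mathrm{first}(\Theta))$ is the Echelon-form representative and, viewed as a block matrix with blocks of size $\la_i\times\nu_j$, has block sums equal to the intersection matrix attached to $\Theta$, every pair in $\Omega_\Theta$ extends to full flags whose intersection matrix is $\sigma$, whence $\gb_{\Omega_\Theta}=\mathbf{1}_{P_\nu \sigma P_\la}=\Gb_{\Theta}$ with no extra scalar.
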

\begin{proof} We prove this by proving that
if $q$ is the cardinality of field $\kk$ then the set
$(\cC_{\mu\la}, \Lambda_{\la \wedge \mu})$ coincides with MDJ
basis of $\cN_{\la\mu}^q$ up to a scalar. That is, if the orbits
$\Omega_1$ and $\Omega_2$ correspond to semistandard tableau
$\Theta_1$ and $\Theta_2$ by RSK, then
\[
\cb_{\Omega_1 \Omega_2}^{\nu} = \frac{|P_{\nu}|}{|B|} m_{\Theta_1
\Theta_2}.
\]
By definition of $m_{\Theta_1\Theta_2}$ and (\ref{remark from mathas}),
\begin{eqnarray}
m_{\Theta_1 \Theta_2}  =  \sum_{\theta_1, \theta_2} m_{\theta_1 \theta_2} = \sum_{\theta_1, \theta_2} T^{\star}_{d(\theta_1)} m_{\nu} T_{d(\theta_2)}.
\end{eqnarray}
By using the observations:
\begin{enumerate}
\item $m_{\nu}^{\star} = m_{\nu}$, $m_{\nu}^{2} = \sum_{w \in S_{\nu}} q^{l(w)} m_{\nu}$.
\item $\sum_{w \in S_{\nu}} q^{l(w)} = \frac{|P_{\nu}|}{|B|}$.
\end{enumerate}
We obtain
\[
m_{\Theta_1 \Theta_2} = \frac{|B|}{|P_{\nu}|} \Gb_{\Theta_1}^{\star} \Gb_{\Theta_2}.
\]
We claim that if the semistandard tableau $\Theta_i$ corresponds
to the orbit $\Omega_i$ by RSK then $\Gb_{\Theta_i} =
\gb_{\Omega_i}$. We argue for $i=1$. We have an isomorphism
\[
 \mathcal {H}_{\mathbb Q, q}(S_n) \cong \mathbb{Q} [B\backslash G / B],
\]
such that a basis element $T_{\omega}$ in the Hecke algebra
$\cH_{\bQ, q}(S_n)$ corresponds to the function $\bold{1}_{BwB}
\in \mathbb Q[B\backslash G /B]$. The commutativity of the
following diagram implies that the sum $\sum_{w \in S_\nu \sigma_1
S_{\la}} T_w$ corresponds to $\bold{1}_{P_{\nu} \sigma_1
P_{\la}}$.
\begin{eqnarray*}
 B\backslash G / B & \leftrightarrow & S_{n} \\
\downarrow       &                 &   \downarrow    \\
P_{\nu} \backslash G / P_{\la} & \leftrightarrow & S_{\nu} \backslash S_n /S_{\la}
\end{eqnarray*}
Therefore $\Gb_{\Theta_1}$ belongs to $\mathrm{Hom}_G(\mathcal
F_{\la}, \mathcal F_{\nu})$. Let orbit $\Omega_1$ corresponds to
matrix $m = (m_{ij}) \in \cM_{\nu\la}^{\circ}$ by RSK.  Then by
its definition, matrix $\sigma_1$ is the unique matrix in
$\la\mu$-Echelon form such that when viewed as a block matrix
having $(i,j)^{\mathrm{th}}$ block of size $\la_i \times \nu_j$
for $1 \leq i \leq l(\la)$ and $1 \leq j \leq l(\nu)$ then
$m_{ij}$ = sum of entries of $(i,j)^{\mathrm{th}}$ block of
$\sigma_1$. This implies that if $(y,x) \in \Omega_1$, then there
exist full flags $\bar{y}$ and $\bar{x}$ that extend the flags $y$
and $x$ respectively such that the intersection matrix of
$\bar{y}$ and $\bar{x}$ is $\sigma_1$. Therefore,
\[
\gb_{\Omega_1} = \bold{1}_{P_{\nu} \sigma_1 P_{\la}} = \Gb_{\Theta_1}.
\]
Further, the anti-automorphism $\star$ on Hecke algebra $\cH_{\bQ,
q}(S_n)$ coincides with 'op' on $X_{\la} \times_G X_{\la}$, hence
the result.
\end{proof}
For $\mu \in \Lambda$, let $\cH_{\la}^{\mu} = \mathrm{Span}_{\bQ}
\{ \cb_{\Omega_1 \Omega_2}^{\nu} \mid \nu \leq \mu \} $ and
$\cH_{\la}^{\mu-} = \mathrm{Span}_{\bQ} \{ \cb_{\Omega_1
\Omega_2}^{\nu} \mid \nu < \mu \} $.
\begin{prop}\label{ideal structure} \qquad

\begin{enumerate}
\item [(a)] Let $f \in \mathrm{Hom}_G(\cF_{\la}, \cF_{\mu})$ and $h \in \mathrm{Hom}_G(\cF_{\mu}, \cF_{\la})$ then $h\circ f \in \cH_{\la}^{\mu}$.
\item [(b)] The spaces $\cH_{\la}^{\mu}$ and $\cH_{\la}^{\mu-}$ are two sided ideal of $\cH_{\la}$.
\end{enumerate}
\end{prop}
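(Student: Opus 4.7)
The plan is to prove (a) first, by induction on $\mu$ in the opposite-dominance order, and then deduce (b). By bilinearity and Theorem~\ref{thm:Murphy=our}, it suffices to verify (a) on products $\cb^{\nu_h}_{A_1 A_2} \circ \cb^{\nu_f}_{A_3 A_4}$ of cell basis elements with $\nu_h, \nu_f \le \la \wedge \mu$. The base case---the minimum $\mu = (n)$---is immediate: the relevant Hecke modules $\cN_{\la (n)}$, $\cN_{(n) \la}$ and $\cH_\la^{(n)}$ are all one-dimensional because $X_{(n)}$ is a point.

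For the inductive step, rewrite the product as $\gb_{A_1^{op}} \circ (\gb_{A_2} \circ \gb_{A_3^{op}}) \circ \gb_{A_4}$. The middle factor belongs to $\cN_{\nu_h \nu_f}$, and Theorem~\ref{thm:Murphy=our} applied to it yields an expansion $\sum \gb_{E^{op}} \circ \gb_F$ indexed by $\kappa \le \nu_h \wedge \nu_f \le \mu$. Each resulting summand regroups as $(\gb_{A_1^{op}} \circ \gb_{E^{op}}) \circ (\gb_F \circ \gb_{A_4}) \in \cN_{\la\kappa} \cdot \cN_{\kappa\la}$. When $\kappa < \mu$, the inductive hypothesis gives $\cN_{\la\kappa} \cdot \cN_{\kappa\la} \subset \cH_\la^\kappa \subset \cH_\la^\mu$. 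The remaining case $\kappa = \mu$ forces $\nu_h = \nu_f = \mu$ via the meet; here the key structural observation is that $(X_\mu \times_G X_\mu)^\circ$ is a single orbit---the identity embedding---since the only semistandard $\mu$-tableau of type $\mu$ has row $i$ filled entirely with $i$'s. Consequently $\gb_{A_2}$ and $\gb_{A_3^{op}}$ both equal the identity endomorphism of $\cF_\mu$, and the product collapses to $\gb_{A_1^{op}} \circ \gb_{A_4} = \cb^\mu_{A_1 A_4} \in \cH_\la^\mu$, closing the induction. The principal obstacle is precisely this top-level case, which is rescued by the rigidity of $(X_\mu \times_G X_\mu)^\circ$ as a single identity orbit.

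For (b), each basis element $\cb^\nu_{\Omega_1 \Omega_2} = \gb_{\Omega_1^{op}} \circ \gb_{\Omega_2}$ of $\cH_\la^\mu$ (with $\nu \le \mu$) factors through $\cF_\nu$ with $\gb_{\Omega_1^{op}} \in \cN_{\la\nu}$ and $\gb_{\Omega_2} \in \cN_{\nu\la}$. For any $a \in \cH_\la$, the factorization $a \circ \cb^\nu_{\Omega_1 \Omega_2} = (a \circ \gb_{\Omega_1^{op}}) \circ \gb_{\Omega_2}$ lies in $\cN_{\la\nu} \cdot \cN_{\nu\la} \subset \cH_\la^\nu \subset \cH_\la^\mu$ by (a), giving the left-ideal property; the symmetric factorization $\cb^\nu_{\Omega_1 \Omega_2} \circ a = \gb_{\Omega_1^{op}} \circ (\gb_{\Omega_2} \circ a)$ yields the right-ideal property. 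The identical reasoning with strict inequalities throughout establishes that $\cH_\la^{\mu-}$ is a two-sided ideal as well.
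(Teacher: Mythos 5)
Your proof is correct and follows essentially the same route as the paper: induction on the dominance poset with base case $\mu=(n)$, expansion of $h$ and $f$ in the cellular basis of Theorem~\ref{thm:Murphy=our}, and collapse of the top-level terms using the fact that the unique permissible embedding of a $\mu$-flag in a $\mu$-flag is the identity orbit, whose $\gb$ is $\mathrm{id}_{\cF_\mu}$. The only cosmetic difference is that you re-expand the middle composite $\gb_{A_2}\circ\gb_{A_3^{\mathrm{op}}}\in\cN_{\nu_h\nu_f}$ before invoking the inductive hypothesis, whereas the paper reduces $h$ and $f$ modulo the span of the lower cell elements and computes $\cb^{\mu}_{\Omega_{p_i}\Omega_{\mathrm{id}}}\cb^{\mu}_{\Omega_{\mathrm{id}}\Omega_{p_j}}=\cb^{\mu}_{\Omega_{p_i}\Omega_{p_j}}$ directly; your deduction of (b) from (a) matches the paper's.
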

\begin{proof} (a) We prove this by induction on the partially ordered set $\Lambda_{\la}$. If $\delta = (n)$, the partition with only one
part, then $\cF_{\delta}$ is the trivial representation, and it is
easily seen that $\cN_{\la\delta}$ and $\cN_{\delta\la}$ are one
dimensional. It follows that $\cH_{\la}^{\delta}=\cN_{\la \delta}
\circ \cN_{\delta \la}$ is one dimensional and spanned by
$\cb_{[(x,0)],[(0,x)]}^\delta$. This established the basis for the
induction. Now assume the assertion is true for any $\nu \in
\lambda_{\la}$ such that $\nu < \mu$. We prove the result for
$\mu$. Let $p_i$ for $1 \leq i \leq r$ be all the permissible
embeddings of $\mu$-flags into $\la$-flags and let $\Omega_{p_i}
\in (X_{\mu} \times_G X_{\la})^{\circ}$ be the orbits
corresponding to these embeddings. The orbit corresponding to the
identity mapping of $\mu$-flag into itself is denoted by
$\Omega_{\mathrm{id}}$. Let ${\cN}'_{\la \mu}$ be the subspace of
$\cN_{\la \mu}$ generated by the set $\{ \cb_{\Omega_{1}
\Omega_2}^{\nu}  \mid \nu < \mu, \Omega_1 \in (X_{\nu} \times_G
X_{\la})^{\circ}, \Omega_2 \in (X_{\nu} \times_G
X_{\mu})^{\circ}\}$. It follows that any $h \in
\mathrm{Hom}_G(\cF_{\mu}, \cF_{\la})$ can be written as linear
combination $\sum_{i=1}^r \alpha_i \cb_{\Omega_{\mathrm{id}}
\Omega_{p_i}}^{\mu} \;\; \mathrm{mod}\,\, {\cN'_{\la \mu}}$.
Therefore, it is enough to prove that $ \cb_{\Omega_{p_i}
\Omega_{\mathrm{id}}}^{\mu} f \in \cH_{\la}^{\mu}$ for all $1 \leq
i \leq r$. Arguing similarly for $f$ it suffices to prove that
$\cb_{\Omega_{p_{i}} \Omega_{id}}^{\mu} \cb_{\Omega_{\mathrm{id}}
\Omega_{p_j}}^{\mu} \in \cH_{\la}^{\mu}$ for all $1 \leq i,j \leq
r$. But since $\cb_{\Omega_{p_{i}} \Omega_{\mathrm{id}}}^{\mu}
\cb_{\Omega_{\mathrm{id}} \Omega_{p_j}}^{\mu} =
\cb_{\Omega_{p_{i}} \Omega_{p_j}}^{\mu}$, the result follows.

\noindent (b) The fact that $\cH_{\la}^{\mu}$  is a two-sided
ideal follows immediately from (a) and the fact that
$\{\cb_{\Omega_2 \Omega_1}^{\mu}\mid \mu \le \la, \Omega_1,
\Omega_2\}$ is a basis by observing that compositions of basis
elements of the form $\cb_{\Omega'_1\Omega'_2}^{\nu} \cb_{\Omega_1
\Omega_2}^{\mu}$
\[
\xymatrix{
\cF_{\la} \ar[dr]|{{\gb_{\Omega_2}}} &  & \cF_{\la} \ar[dr]|{{\gb_{\Omega_2'}}} &  & \cF_{\la} \\
  &             \cF_{\mu}  \ar[ur]|{{\gb_{\Omega_1^{\mathrm{op}}}}}  &  &  \cF_{\nu} \ar[ur]|{{\gb_{{\Omega_1'}^{\mathrm{op}}}}} &   }
\]
lies in $\cH_\la^{\mu \wedge \nu}$. Finally, as
$\cH_{\la}^{\mu-}=\sum_{\nu < \mu}\cH_{\la}^{\nu}$, the latter is
an ideal as well.

\end{proof}

\begin{thm}
\label{thm: cellular} The Hecke algebra $\cH_{\la}$ is cellular
with respect to $(\cC_{\la\la}, \Lambda_{\la} )$.
\end{thm}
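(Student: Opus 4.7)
The plan is to verify the two axioms of Definition~\ref{cellular algebra} for the basis $\cC_{\la\la}$ and the poset $\Lambda_\la$, using the ideal filtration $\cH_\la^{\mu-} \subset \cH_\la^\mu$ from Proposition~\ref{ideal structure} in the role of $\tilde A^\mu \subset A^\mu$. For axiom~(1) I would take $\star$ to be the adjoint with respect to the canonical $G$-invariant inner product on $\cF_\la = \bQ(X_\la)$ in which the indicator functions of points of $X_\la$ form an orthonormal basis. Since each $\gb_\Omega$ acts on this basis by an incidence matrix, a direct computation yields $\gb_\Omega^\star = \gb_{\Omega^{\mathrm{op}}}$, whence
\[
(\cb_{\Omega_1 \Omega_2}^\nu)^\star = (\gb_{\Omega_1^{\mathrm{op}}} \circ \gb_{\Omega_2})^\star = \gb_{\Omega_2}^\star \circ \gb_{\Omega_1^{\mathrm{op}}}^\star = \gb_{\Omega_2^{\mathrm{op}}} \circ \gb_{\Omega_1} = \cb_{\Omega_2 \Omega_1}^\nu,
\]
which is exactly the symmetry required.

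For axiom~(2), fix $a \in \cH_\la$ and a basis vector $\cb_{\Omega_1 \Omega_2}^\mu = \gb_{\Omega_1^{\mathrm{op}}} \circ \gb_{\Omega_2}$. I would first rewrite $a \cdot \cb_{\Omega_1 \Omega_2}^\mu = (a \circ \gb_{\Omega_1^{\mathrm{op}}}) \circ \gb_{\Omega_2}$ and expand the intertwiner $a \circ \gb_{\Omega_1^{\mathrm{op}}} \in \cN_{\la\mu}$ in the basis $\cC_{\mu\la}$ supplied by Theorem~\ref{thm:Murphy=our}:
\[
a \circ \gb_{\Omega_1^{\mathrm{op}}} = \sum_{\nu \le \mu}\sum_{\Omega_1', \tilde\Omega} \gamma_{\Omega_1',\tilde\Omega}^\nu \, \gb_{(\Omega_1')^{\mathrm{op}}} \circ \gb_{\tilde\Omega},
\]
with $\Omega_1' \in (X_\nu \times_G X_\la)^\circ$ and $\tilde\Omega \in (X_\nu \times_G X_\mu)^\circ$. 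Post-composing with $\gb_{\Omega_2}$, any summand with $\nu < \mu$ becomes $\gb_{(\Omega_1')^{\mathrm{op}}} \circ (\gb_{\tilde\Omega} \circ \gb_{\Omega_2})$; the inner factor lies in $\Hom_G(\cF_\la, \cF_\nu)$, so by Proposition~\ref{ideal structure}(a), applied with $\nu$ in place of $\mu$, the full composition belongs to $\cH_\la^\nu \subseteq \cH_\la^{\mu-}$ and can be discarded.

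Only the $\nu = \mu$ stratum survives modulo $\cH_\la^{\mu-}$. Here the crucial observation is that $\sstd(\mu,\mu)$ consists of the single canonical semistandard tableau whose $i$-th row is filled by $i$'s; via the RSK bijection~\eqref{geometric.RSK} this forces $(X_\mu \times_G X_\mu)^\circ$ to contain a unique orbit, namely the diagonal $\Omega_{\mathrm{id}}$ of pairs $(x,x)$, for which $\gb_{\Omega_{\mathrm{id}}} = \mathrm{id}_{\cF_\mu}$. The $\nu = \mu$ contribution therefore collapses to $\sum_{\Omega_1'} \gamma_{\Omega_1',\Omega_{\mathrm{id}}}^\mu \cb_{\Omega_1' \Omega_2}^\mu$, with coefficients depending only on $a$ and $\Omega_1$, which is precisely condition~\eqref{cellular condition} upon setting $\alpha_{\Omega_1'} := \gamma_{\Omega_1', \Omega_{\mathrm{id}}}^\mu$. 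The step that I expect to need the most care is exactly this top-stratum collapse: preservation of the right index $\Omega_2$ under the left action of $a$ hinges entirely on the uniqueness of the semistandard $(\mu,\mu)$-tableau and on identifying the corresponding orbit as the diagonal.
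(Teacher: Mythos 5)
Your proof is correct and follows essentially the same route as the paper: the involution $\star$ is the op/adjoint map with $(\cb^{\nu}_{\Omega_1\Omega_2})^{\star}=\cb^{\nu}_{\Omega_2\Omega_1}$, and condition~\eqref{cellular condition} is verified via the filtration $\cH_{\la}^{\mu-}\subset\cH_{\la}^{\mu}$ of Proposition~\ref{ideal structure} together with the basis of Theorem~\ref{thm:Murphy=our}. Your top-stratum collapse (uniqueness of the semistandard $(\mu,\mu)$-tableau, so that $(X_{\mu}\times_G X_{\mu})^{\circ}$ is the diagonal orbit and $\gb_{\Omega_{\mathrm{id}}}=\mathrm{id}_{\cF_{\mu}}$, giving coefficients independent of the right index $\Omega_2$) simply makes explicit what the paper's two-line proof leaves implicit, and is the same mechanism already used in the paper's proof of Proposition~\ref{ideal structure}(a).
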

\begin{proof} We have a natural anti-automorphism of the Hecke algebras $\cH_{\la}$ defined as
\[
(\cb_{\Omega_1 \Omega_2}^{\mu})^{\star} = \cb_{\Omega_2 \Omega_1}^{\mu}.
\]
Proposition \ref{ideal structure} implies that the criterion
\ref{cellular condition} for cellularity is fulfilled as well.
\end{proof}

\begin{cor}
\label{main} There exists a collection $\{\cU_{\la} \mid \la \in
\Lambda_n\}$ of inequivalent irreducible representations of
$\mathrm{GL}_n(\kk)$ such that
\begin{enumerate}
\item $\cF_{\la} = \oplus_{\nu \leq \la} \cU_{\nu}^{|\cM_{\nu
\la}^\circ|}$;
\item For every $\mu, \nu \leq \la$, one has $\dim_\bQ \mathrm{Hom}_G \left( \cU_{\nu}, \cF_{\mu}\right) = |\cM_{\nu\mu}^\circ|$. That is,
the multiplicity of $\cU_{\nu}$ in $\cF_{\mu}$ is the number of
non-equivalent permissible embeddings of a $\nu$-flag in a
$\mu$-flag. In particular $\cU_{\nu}$ appears in $\cF_{\nu}$ with
multiplicity one and does not appear in $\cF_{\mu}$ unless $\nu
\leq \mu$.
\end{enumerate}
\end{cor}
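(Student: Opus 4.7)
The plan is to apply the structure theorem for semisimple cellular algebras (recalled at the end of Section~\ref{Cellularity}) to $\cH_\la$. By Theorem~\ref{thm: cellular} it is cellular, and it is semisimple since $\bQ[G]$ is. With $\cT(\nu)\leftrightarrow\cM_{\nu\la}^\circ$, which is nonempty iff $\nu\le\la$, the cell modules $C^\nu_\la$ are all nonzero, have $\dim_\bQ C^\nu_\la=|\cM_{\nu\la}^\circ|$, and exhaust the irreducible $\cH_\la$-modules. The double centraliser for the semisimple $(G,\cH_\la)$-bimodule $\cF_\la$ then supplies
\[
\cF_\la\;\cong\;\bigoplus_{\nu\le\la}W^\la_\nu\otimes C^\nu_\la
\]
with pairwise non-isomorphic irreducible $G$-modules $W^\la_\nu$ of multiplicity $|\cM_{\nu\la}^\circ|$; in particular $\cF_\la$ has exactly $|\Lambda_\la|$ distinct irreducible constituents.

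To identify these irreducibles consistently across $\la$, I would proceed by induction on $\nu\in\Lambda_n$ in the opposite dominance order, defining $\cU_\nu$ as the unique irreducible constituent of $\cF_\nu$ that does not occur in any $\cF_\mu$ with $\mu<\nu$, and proving simultaneously that $m_{\cU_\nu}(\cF_\la)=|\cM_{\nu\la}^\circ|$ for every $\la$ (with $m_U(V)$ denoting multiplicity). Well-definedness of $\cU_\nu$ is a cardinality check: the inductive hypothesis supplies $|\Lambda_\nu|-1$ pairwise inequivalent representations $\cU_\mu$ ($\mu<\nu$) all appearing in $\cF_\nu$ with positive multiplicity, while the cellular decomposition of $\cF_\nu$ has exactly $|\Lambda_\nu|$ distinct constituents, leaving room for exactly one more. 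The base case $\nu=(n)$ is immediate, since $\cF_{(n)}$ is trivial and $|\cM_{(n)\la}^\circ|=1$.

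The inductive step comes from computing $\dim_\bQ\Hom_G(\cF_\nu,\cF_\la)$ two ways. Via the cellular $\bQ$-basis $\cC_{\la\nu}$ of Theorem~\ref{thm:Murphy=our} one obtains $\sum_{\delta\le\nu\wedge\la}|\cM_{\delta\nu}^\circ||\cM_{\delta\la}^\circ|$; via Schur's lemma applied to $\cF_\nu\cong\cU_\nu^{m_{\cU_\nu}(\cF_\nu)}\oplus\bigoplus_{\mu<\nu}\cU_\mu^{|\cM_{\mu\nu}^\circ|}$ together with the inductive values $m_{\cU_\mu}(\cF_\la)=|\cM_{\mu\la}^\circ|$ one obtains $m_{\cU_\nu}(\cF_\nu)\cdot m_{\cU_\nu}(\cF_\la)+\sum_{\mu<\nu}|\cM_{\mu\nu}^\circ||\cM_{\mu\la}^\circ|$. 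Specialising to $\la=\nu$ forces $m_{\cU_\nu}(\cF_\nu)^2=1$, hence $m_{\cU_\nu}(\cF_\nu)=1$; feeding this back and equating for general $\la$ yields $m_{\cU_\nu}(\cF_\la)=|\cM_{\nu\la}^\circ|$ when $\nu\le\la$, while when $\nu\not\le\la$ the two sums cancel and the multiplicity is zero. Item~(2) of the Corollary is precisely this multiplicity formula, and item~(1) follows because the $|\Lambda_\la|$ distinct $\cU_\nu$ ($\nu\le\la$) match the $|\Lambda_\la|$ irreducible isotypic components of $\cF_\la$ supplied by the cellular decomposition.

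The main obstacle, and the only non-formal step, is the reconciliation between the intrinsic definition of $\cU_\nu$ (as the unique new constituent of $\cF_\nu$) and the abstract cell-module labelling $\nu\mapsto W^\la_\nu$ coming from Theorem~\ref{thm: cellular}: what is a priori only a labelled multiplicity inside each $\cH_\la$ separately is promoted, via the double Hom-count above together with the inductive cardinality argument, to a genuine identification of the same irreducibles across the entire family $\{\cF_\la\}_{\la\in\Lambda_n}$.
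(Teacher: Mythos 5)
Your proposal is correct and is essentially the argument the paper intends (the corollary is stated without an explicit proof): cellularity of $\cH_\la$ (Theorem~\ref{thm: cellular}) plus the quoted structure theorem for semisimple cellular algebras and double centralizer theory give the decomposition of each $\cF_\la$ with multiplicities $|\cM_{\nu\la}^\circ|$, and the identification of the constituents across different $\la$ -- which the paper encodes in the characterization remark following the corollary -- is obtained, as you do, by the two-fold count of $\dim_\bQ\Hom_G(\cF_\nu,\cF_\la)$ using the bimodule bases of Theorem~\ref{thm:Murphy=our} together with induction along the poset $\Lambda_n$. The only micro-details worth making explicit in your inductive step are $|\cM_{\nu\nu}^\circ|=1$ (needed to force $m_{\cU_\nu}(\cF_\nu)=1$) and that pairwise inequivalence of the previously constructed $\cU_\mu$ for incomparable labels follows from the already-established multiplicity formula; both are immediate.
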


We remark that part (2) of Theorem \ref{main} gives a
characterization of the irreducible representations $\cU_\lambda$,
that is, for each $\lambda \in \Lambda_n$, the representation
$\cU_\lambda$ is the unique irreducible representation which
occurs in $\cF_\lambda$ and do not occur in $\cF_\mu$ for $\mu
\leq \lambda$. \\

\subsection{General flags}
\label{subsec: general flags}In this section we extend our results
of previous section to the flags not necessarily associated with
partitions. A tuple $c= (c_i)$ of positive integers such that
$\sum c_{i} = n$ is called composition of $n$. The length of $c$,
denoted $l(c)$ is the the number of its nonzero parts. By
reordering parts of a composition in a decreasing order we obtain
the unique partition associated with it. We shall use bar to
denote the associated partition. For example if $c =(2,1,2)$, then
$\bar{c} = (2,2,1)$. A chain of $\kk$-vector spaces $ x = (\kk^n =
x_{l(c)} \supset x_{l(c)-1} \supset \cdots \supset x_1 \supset
x_{0} = (0))$ is a $c$-flag if
$\dim_{\kk}(x_{l(c)-i+1}/x_{l(c)-i}) = c_i$ for all $1 \le i \le
l(c)$. Let $X_c$ be the space of all $c$-flags and $\cF_{c} =
\mathbb Q(X_c)$. By the theory of representation of symmetric
groups and Bruhat decomposition, it follows that for any
compositions $c_1$ and $c_2$, the Hecke algebras $\cH_{c_1} =
\mathrm{Hom}_G(\cF_{c_1}, \cF_{c_2})$ and $ \cH_{\bar{c}} =
\mathrm{Hom}_G(\cF_{\bar{c_1}}, \cF_{\bar{c_2}})$ are isomorphic
as $G$-modules. By composing this isomorphism with the cellular
basis of the Hecke algebra $\cH_{\bar{c}}$, one obtains the
cellular basis of the Hecke algebras $\cH_{c}$. This implies that
irreducible components of $\cF_{c}$ are parameterized by the set
of partitions $\la \in \Lambda$ such that $\la \leq \bar{c}$. In
particular this gives the following bijection
\[
X_{c_1} \times_G X_{c_2} \longleftrightarrow \bigsqcup_{\nu \in \Lambda, \nu \leq \bar{c}} (X_{\nu} \times_G X_{c_1})^{\circ} \times (X_{\nu} \times_G X_{c_2})^{\circ}
\]
for certain subsets $(X_{\nu} \times_G X_{c_1})^{\circ}$ and
$(X_{\nu} \times_G X_{c_2})^{\circ}$ of $X_{\nu} \times_G X_{c_1}$
and $X_{\nu} \times_G X_{c_2}$ respectively. For any $(x,y) \in
(X_{\nu} \times_G X_{c_1})^{\circ}$, we say $x$ has permissible
embedding in $y$. Whenever we deal with compositions in later section, by cellular basis and permissible embedding we shall mean the general notions defined in this section.

\newpage


\section{The Module Case}


In this section $\mathfrak {o}$ denotes a complete discrete
valuation ring with maximal ideal $\mathfrak{p}$ and fixed
uniformizer $\pi$. Assume that the residue field $ \kk = \mathfrak
{o}/\mathfrak{p}$ is finite. We denote by $\mathfrak {o}_{\ell}$
the reduction of $\mathfrak {o}$ modulo $\mathfrak{p}^{\ell}$,
i.e., $\mathfrak {o}_{\ell} = \mathfrak {o}/\mathfrak{p}^{\ell}$.
Since $\mathfrak {o}$ is a principal ideal domain with a unique
maximal ideal $\mathfrak{p}$, every finite $\mathfrak {o}$-module
is of the form $\oplus_{i=1}^{j}\mathfrak {o}_{\la_{i}}$, where
the $\la_{i}$'s can be arranged so that $\lambda = (\la_i) \in
\Lambda=\cup \Lambda_n$. The rank of an $\mathfrak {o}$-module is
defined to be the length of the associated partition. Note that in
this section we use arbitrary partitions rather than partitions of
a fixed integer and parameterize different objects than in the
previous sections: types of $\mathfrak{o}$-modules rather than
types of flags of $\kk$-vector spaces. Let $\tau$ be the type map
which maps each $\mathfrak {o}$-module to its associated
partition. The group $\glO{n}{\ell}$ denotes the set of invertible
matrices of order $n$ over the ring $\mathfrak {o}_{\ell}$. Let
\[
\cL^{(r)}=\cL^{(r)}({\ell^n})=\{(x_r,\cdots,x_{1}) \mid \mathfrak
{o}_{\ell}^n \supset x_r \supset \cdots \supset x_{0} =
(0),~\text{$x_i$ are $\mathfrak {o}$-modules}\}
\]
be the space of flags of modules on length $r$ in $\mathfrak
{o}_{\ell}^n$.
There is a natural partial ordering on $\cL^{(r)}$ defined by
$\eta=(y_r,...,y_1) \le (x_r,...,x_1)=\xi$ if there exist
embeddings $\phi_1,...,\phi_r$ such that the diagram
\[
\begin{matrix}
x_r & \supset & x_{r-1} & \supset & \cdots & \supset & x_1  \\
\uparrow_{\phi_r} &  &  \uparrow_{\phi_{r-1}} & & \cdots & &  \uparrow_{\phi_1} \\
y_r & \supset & y_{r-1} & \supset & \cdots & \supset & y_1
\end{matrix}
\]
is commutative. Two flags $\xi$ and $\eta$ are called equivalent,
denoted $\xi \sim \eta$, if the $\phi_i$'s in the diagram are
isomorphisms. For any equivalence class $\Xi=[\xi]$ let
$\cF_\Xi=\mathbb{Q}(\Xi)$ denote the space of rational valued
functions on $\Xi$ endowed with the natural
$\glO{n}{\ell}$-action. We use the letter $\Xi$ to denote a set of
flags as well as the {\em type} of the flags in this set.

\smallskip

En route to developing the language and tools for decomposing the
representations $\cF_\Xi$ into irreducible representations we
treat here the special case $\ell=2$ and give a complete spectral
decomposition for the $\glO{n}{2}$-representations $\cF_\Xi$ with
$\Xi \subset \cL^{(1)}({2^n})$. Recall $ \Xi \in \cL^{(1)}(2^n)$
consists of all submodules $x \subset \mathfrak{o}_2^n$ with a
fixed type $\la$. We shall also assume that $n \geq
2(\mathrm{Rank}(x))$ and $\mathrm{Rank}(x) \geq
2(\mathrm{Rank}(\pi x))$. We have a map $\iota: \cL^{(1)} \to
\cL^{(2)}$ given by $y \mapsto (y,\pi y)$ which allows us to
identify any module with a (canonically defined) pair of modules.
We will see that to find and separate the irreducible constituents
of $\cF_{\la}$ with $\la \subset \cL^{(1)}$ we need to use a
specific set of representations $\cF_\eta$ with $\eta \in
\cL^{(2)}$ such that $\eta \le \iota(\la)$. A similar phenomenon
has been observed also in \cite{MR2283434}. We remark that for the
groups $\glO{n}{2}$, it is known that the dimensions of complex
irreducible representations and their numbers in each dimension
depend only on the cardinality of residue field of $\mathfrak
{o}$, see \cite{MR2684153}. For the current setting we shall prove
that the numbers and multiplicities of the irreducible
constituents of $\cF_\lambda$ with $\la \subset \cL^{(1)}$ are
independent of the residue field as well, though this is not true
in general, see \cite{MR2267575, MR2504482}. In this section we
shall use the notation $G$ to denote the group $\glO{n}{2}$, and
the group of invertible matrices of order $n$ over the field $\kk$
is denoted by $\GL_n(\kk)$.

\subsection{Parameterizing set}
Let $\sab \subset \cL^{(2)}$ be the set of tuples $(x_2, x_1)$
satisfying
\begin{enumerate}
\item The module $x_{1}$ has a unique embedding in $x_2$ (up to automorphism).
\item $(x_2,x_1) \le \iota(y)$, $\tau(y)=\la$.
\item $\mathrm{Rank} (x_1) \leq \mathrm{Rank}(x_2/x_1)$.
\end{enumerate}
Let $\pab = \sab/\!\sim$ be the set of equivalence classes in
$\sab$. The uniqueness of embedding implies that $(x_2, x_1)$,
$(y_2, y_1) \in \sab$ are equivalent if and only if $\tau(x_2) =
\tau(y_2)$ and $\tau(x_1) = \tau(y_1)$. Therefore, $\xi = [(x_2,
x_1)] \in \pab$ may be identified with the pair $\mu^{(2)} \supset
\mu^{(1)}$ where $\mu^{(2)} = \tau(x_2)$ and $\mu^{(1)} =
\tau(x_1)$. Further, if $(x_2, x_1) \in \sab$ is such that
$\tau(x_2) = \la$ and $x_1 = \pi x_2$, then the equivalence class
of $(x_2, x_1)$ in $\pab$ is also denoted by $\iota(\la)$. For
$\xi \in \pab$, let
\[
Y_{\xi} = \{ x \in \sab \mid [x] = \xi \}.
\]

Then $\sab = \sqcup_{\xi \in \pab} Y_{\xi}$. Let $F_{\xi} =
\mathbb Q (Y_{\xi})$ be the space of rational valued functions on
$\y{\xi}$. As discussed earlier, the space $\cF_{\iota(\la)}$
coincides with $\cF_{\la}$. We shall prove that $\pab$
parameterizes the irreducible representations of the space
$\cF_{\la}$ and in particular satisfies a relation similar to
(\ref{geometric.RSK}) (See Proposition~\ref{RSK in modules}).

\subsection{An analogue of the RSK correspondence} For
$a \in \mathfrak{o}$ and an $\mathfrak{o}$-module $x$, let $x[a]$
and $ax$ denote the kernel and the image, respectively, of the
endomorphism of $x$ obtained by multiplication by $a$. For any $x
= (x_2, x_1) \in \sab$, the flag of $\pi$-torsion points of $x$,
denoted $x_{\pi}$, is the flag $\kk^n \supseteq x_2[\pi] \supset
x_1 \supset \pi x_2$. In general this flag may not be associated
with a partition but rather a composition. If $x, y \in \sab$ are
such that $[x] = [y]$ then the compositions associated with the
flags $x_{\pi}$ and $y_{\pi}$ are equal. Hence if $[x] = \xi$,
then the composition associated with $x_{\pi}$ is denoted by
$c(\xi)$.
\begin{lem}
\label{lem:reducing cosets to field}
There exists a canonical bijection between the sets \[
\{[(x_2, x_1),(y_2, y_1)] \in Y_{\iota(\la)} \times_{\gg} Y_{\xi} \mid  x_2 \cap y_2 \cong \kk^t, t \in \mathbb N \} \leftrightarrow
X_{c(\iota(\la) )} \times_{\gh} X_{c(\xi)}
\]
obtained by mapping $[(x_2, x_1),(y_2, y_1)] $ to $[(x_2, x_1)_{\pi}, (y_2, y_1)_{\pi}]$.
\end{lem}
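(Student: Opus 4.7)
The plan is to check well-definedness, exhibit an inverse by lifting flags from the residue field back to $\mathfrak{o}_2$, and then verify injectivity --- the last step being where the intersection hypothesis enters crucially.

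Every entry of the flag $x_\pi = (x_2[\pi] \supset x_1 \supset \pi x_2)$ lies in $(\mathfrak{o}_2^n)[\pi] = \pi \mathfrak{o}_2^n$, a $\kk$-vector space of dimension $n$. The $G$-action on $\pi \mathfrak{o}_2^n$ factors through $\gh$: for $g = 1 + \pi M \in \ker(G \twoheadrightarrow \gh)$ and $\pi v \in \pi \mathfrak{o}_2^n$, one has $g(\pi v) = \pi v + \pi^2 M v = \pi v$ since $\pi^2 = 0$ in $\mathfrak{o}_2$. Hence $[(x,y)] \mapsto [(x_\pi, y_\pi)]$ descends to a well-defined map of orbits.

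For surjectivity, given $\gh$-orbit representatives $(\bar x, \bar y) \in X_{c(\iota(\la))} \times X_{c(\xi)}$, write $\bar x = (A \supseteq B \supseteq C)$ in $\pi\mathfrak{o}_2^n$ and similarly $\bar y = (A' \supseteq B' \supseteq C')$. Pick a $\kk$-basis $\{u_i\}$ of $C$, choose preimages $v_i \in \mathfrak{o}_2^n$ with $\pi v_i = u_i$, and set $x_2 := \sum \mathfrak{o}_2 v_i + A$ and $x_1 := B$; a direct check gives $\tau(x_2) = \la$, $x_2[\pi] = A$, and $\pi x_2 = C$. Lift $\bar y$ similarly to $(y_2, y_1)$, choosing the new lifts generically so that the combined set of $\mathfrak{o}_2$-lifts is linearly independent modulo $A + A'$; the rank hypotheses on $\la$ and $\xi$ provide enough room for this. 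Then any $w \in x_2 \cap y_2$ must have $\pi w \in C \cap C'$ realizable in both modules, which forces $w \in (\mathfrak{o}_2^n)[\pi]$, and hence $x_2 \cap y_2 \cong \kk^t$.

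For injectivity, suppose $[(x,y)]$ and $[(x',y')]$ on the left share a $\gh$-orbit image. Pick $\bar g \in \gh$ intertwining $(x_\pi, y_\pi)$ with $(x'_\pi, y'_\pi)$ and any lift $g \in G$. Then $gx_2$ and $x'_2$ are type-$\la$ submodules with equal $\pi$-torsion and $\pi$-image, so they are conjugate by an element of $\ker(G \twoheadrightarrow \gh)$; absorb this into $g$ to obtain $gx = x'$. The main obstacle is arranging $gy = y'$ simultaneously, which requires a further kernel element that fixes $x_2$. The hypothesis $x_2 \cap y_2 \cong \kk^t$ is exactly what makes this possible, as it forces $x_2 \cap y_2 = x_2[\pi] \cap y_2[\pi]$, so the correcting element can be chosen to act on generators of $y_2$ that are disjoint (modulo torsion) from generators of $x_2$, and the two corrections can be taken compatibly. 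Without this hypothesis, the non-torsion part of $x_2 \cap y_2$ would carry additional $G$-orbit invariants invisible to the $\pi$-torsion data, breaking injectivity.
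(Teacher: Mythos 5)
Your overall strategy (well-definedness via the $\pi$-torsion construction, surjectivity by lifting flags from $\kk^n\cong\pi\mathfrak{o}_2^n$, injectivity by lifting/extending the residue-level isomorphism) parallels the paper's, and the well-definedness step and the first correction (two type-$\la$ submodules with the same $\pi$-torsion and the same $\pi$-image are conjugate under $\ker(G\to\GL_n(\kk))$) are fine. But both places where the real content sits have gaps. In surjectivity, the genericity you invoke cannot be arranged: a lift $v$ of $u\in C$ (i.e.\ $\pi v=u$) is determined modulo $(\mathfrak{o}_2^n)[\pi]=\pi\mathfrak{o}_2^n$, so its reduction mod $\pi$ is canonically the preimage of $u$ under $\kk^n\cong\pi\mathfrak{o}_2^n$. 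Hence the combined lifts reduce exactly to the preimages of $C$ and $C'$, and they can never be made ``linearly independent modulo $A+A'$'' when $C\cap C'=\pi x_2\cap\pi y_2\neq 0$ --- and such orbits do occur on the right-hand side of the bijection. What one can (and must) arrange instead is that for every $0\neq c\in C\cap C'$ the difference of its two lifts, one taken inside $x_2$ and one inside $y_2$, is not absorbed by $A+A'$; making that map injective is possible precisely because $n\geq 2\,l(\la)$ provides room, and it is this, not independence of the lifts, that ``forces $w\in(\mathfrak{o}_2^n)[\pi]$''.

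The injectivity step has the more serious gap. The claim that torsion intersection lets you choose generators of $y_2$ ``disjoint (modulo torsion) from generators of $x_2$'' is false: take $x_2=\mathfrak{o}_2e_1$ and $y_2=\mathfrak{o}_2(e_1+\pi e_2)$; then $x_2\cap y_2=\pi\mathfrak{o}_2e_1\cong\kk$ is torsion, yet $y_2\subseteq x_2+\pi\mathfrak{o}_2^n$, so every free generator of $y_2$ agrees with an element of $x_2$ modulo torsion (the images of $x_2$ and $y_2$ mod $\pi$ may coincide even when $x_2\cap y_2\cong\kk^t$). Consequently the existence of the second correcting element --- a kernel element $1+\pi M$ fixing $x'$ setwise while carrying $gy$ to $y'$ --- is exactly the hard point of the lemma and is not established by your argument: the condition that $M$ preserve $x'_2$ and the condition that $M$ take prescribed values on the free generators of $gy_2$ interact nontrivially on the (possibly nonzero) overlap of the images of $x'_2$ and $y_2$ mod $\pi$, and their compatibility has to be proved. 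The paper handles this by choosing maximal free submodules $x_3,x'_3,y_3,y'_3$, extending the residue isomorphism $h$ compatibly to $x_3+\pi\mathfrak{o}_2^n$ and $y_3+\pi\mathfrak{o}_2^n$ and gluing on $x_3+y_3+\pi\mathfrak{o}_2^n$, and it is precisely here that the hypothesis $n\geq 2\,l(\la)$ is invoked; your injectivity argument never uses that hypothesis, which signals that the crucial step has been skipped rather than solved.
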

\begin{proof}
      Since all the pairwise intersections obtained from the modules $x_2$, $x_1$, $y_2$ and $y_1$ are $\kk$-vector spaces,
      by taking the flags of the $\pi$-torsion points we obtain a well-defined map from $Y_{\iota(\la)} \times_{\gg}
      Y_{\xi}$ to $X_{c(\iota(\la))} \times_{\gh} X_{c(\xi)}$.

      We first prove that this map is injective.
Let $(x,y),(x',y')\in Y_{\iota(\la)} \times Y_{\xi}$ be such that
$x_\pi=x'_\pi$ and $y_\pi=y'_\pi$. Assume that
$[(x_\pi,y_\pi)]=[(x'_\pi,y'_\pi)]$. This means that there exists
an isomorphism $h:\pi\mathfrak{o}_2^n \to \pi\mathfrak{o}_2^n$
such that $h(x_\pi) = x'_\pi$ and $h(y_\pi) = y'_\pi$. We need to
extend $h$ to a map $\tilde{h}:\mathfrak{o}_2^n \to
\mathfrak{o}_2^n$ such that $\tilde{h}(x)=x'$ and
$\tilde{h}(y)=y'$. The elements $x$ and $y$ are tuples of the form
$(x_2,x_1)$ and $(y_2,y_1)$, respectively. We choose maximal free
$\mathfrak{o}_2$-submodules $x_3,x'_3,y_3,y'_3$ of
$x_2,x'_2,y_2,y'_2$, respectively. Since $n \ge 2 (l(\la))$, we
can extend the map $h$ to maps $(x_3+\pi\mathfrak{o}_2^n) \to
(x'_3+\pi\mathfrak{o}_2^n)$ and $(y_3+\pi\mathfrak{o}_2^n) \to
(y'_3+\pi\mathfrak{o}_2^n)$ in a compatible manner such that these
two extensions glue to a unique well-defined map
$(x_3+y_3+\pi\mathfrak{o}_2^n) \to
(x'_3+y'_3+\pi\mathfrak{o}_2^n)$. The latter can now be extended
to an isomorphism $\tilde{h}:\mathfrak{o}_2^n \to
\mathfrak{o}_2^n$ with the desired properties.

To prove surjectivity we need to find a pair $(x,y) \in
Y_{\iota(\la)} \times Y_{\xi}$ which maps to a given pair $(u,v)
\in X_{c(\iota(\la) )} \times X_{c(\xi)}$. This follows at once
from the assumption $n \ge 2 (l(\la))$.

\end{proof}


Let $x = (\kk^n = x_t \supset \cdots \supset x_1 \supset x_0 =
(0))$ be a flag of $\kk$-vector spaces and $v$ be a $k$-vector
space such that $x_i\supset v$ for all $i$, then $x$ modulo $v$,
denoted as $x/v$ is the flag $x/v = (\kk^n /v = x_t/v \supset
\cdots \supset x_1/v \supset (0))$. Let $x=(x_2, x_1), y=(y_2,
y_1) \in \sab$ be such that $x \geq y$. Flags of our primal
interest are $x_{\pi}/\pi y_2$ and $y_{\pi}/\pi y_2$. Observe that
although the flag $y_{\pi}/\pi y_2$ is associated with a
partition, the flag $x_{\pi}/\pi y_2$ may only be associated to a
composition. We say that $y$ embeds into $x$ permissibly if $y
\leq x$ and $y_{\pi}/\pi y_2$ embeds permissibly into $x_{\pi}/\pi
y_2$ (see Section~\ref{subsec: general flags}). For $\eta \leq
\xi$, let $ (Y_{\eta} \times_{\gg} Y_{\xi})^{\circ}$ denote the
set of equivalence classes $[(y,x)] \in Y_{\eta} \times_{\gg}
Y_{\xi}$ such that $y$ embeds permissibly in $x$.
\begin{prop}
\label{RSK in modules}There exists a bijection between the following sets
\[
Y_{\iota(\la)} \times_{\gg} Y_{\xi} \longleftrightarrow
 \bigsqcup_{\eta \in \pab,\eta \leq \xi} (Y_{\eta} \times_{\gg} Y_{\iota(\la)})^{\circ} \times (Y_{\eta} \times_{\gg} Y_{\xi})^{\circ}.
 \]
\end{prop}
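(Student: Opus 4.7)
The strategy is to adapt the field-level RSK bijection \eqref{geometric.RSK} to the module setting by combining the $\pi$-torsion reduction of Lemma~\ref{lem:reducing cosets to field} with the general-flag version of RSK from Section~\ref{subsec: general flags}.

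Given $[(x,y)] \in Y_{\iota(\la)} \times_G Y_\xi$, I plan to produce a triple consisting of a common sub-flag $z \in \sab$ of some type $\eta \le \xi$ together with orbits $[(z,x)] \in (Y_\eta \times_G Y_{\iota(\la)})^\circ$ and $[(z,y)] \in (Y_\eta \times_G Y_\xi)^\circ$. The module $z$ is constructed via a canonical RSK-type algorithm applied to the $\pi$-torsion data of the pair $(x,y)$, chosen so that the resulting inclusions $z \hookrightarrow x$ and $z \hookrightarrow y$ are permissible in the sense defined just before the proposition. Since $z \le y$ by construction, the type $\eta = [z]$ automatically satisfies $\eta \le \xi$.

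The heart of the argument is the reduction to the field case. Passing to $\pi$-torsion, the pair $(x_\pi, y_\pi)$ lives in a product of composition-flag spaces over $\kk$; Lemma~\ref{lem:reducing cosets to field} makes this reduction precise when $x_2 \cap y_2$ is a $\kk$-space, and the general case is handled by first quotienting out a suitable common $\mathfrak{o}_2$-submodule of $x_2$ and $y_2$ so as to land in this situation. To the resulting pair of composition flags I apply the general-flag version of RSK from Section~\ref{subsec: general flags}, producing a permissibly embedded field flag $u$ inside both $x_\pi$ and $y_\pi$. I then lift $u$ to an $\mathfrak{o}_2$-module flag $z$ of type $\eta$, using the rank hypotheses $n \ge 2l(\la)$ and $\mathrm{Rank}(x_2) \ge 2\,\mathrm{Rank}(\pi x_2)$ to guarantee enough free space in $\mathfrak{o}_2^n$ for the lift to be realized compatibly with both embeddings. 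The inverse map takes a triple $(z, [(z,x)], [(z,y)])$ and glues the two permissible embeddings into a single pair $(x,y)$; the gluing follows the extension-of-isomorphisms argument from the surjectivity part of Lemma~\ref{lem:reducing cosets to field}, with the rank hypotheses again providing the room to fit both $x$ and $y$ simultaneously inside $\mathfrak{o}_2^n$.

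The main obstacle will be matching the two notions of permissibility under the $\pi$-torsion reduction: the module-level permissibility of $z \hookrightarrow x$ and $z \hookrightarrow y$, defined via semistandardness of the tableau attached to $y_\pi / \pi y_2$ inside $x_\pi / \pi y_2$, must correspond precisely to the field-level permissibility of the RSK-produced flag $u$ in the composition-version of \eqref{geometric.RSK}. Once this compatibility is established, collecting orbits by the type $\eta = [z]$ of the common sub-flag yields the claimed disjoint union, and the verification that the forward and inverse maps are mutual inverses reduces to tracking the RSK correspondence through the $\pi$-torsion functor.
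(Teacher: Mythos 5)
Your proposal is correct and follows essentially the same route as the paper: one quotients out the free $\mathfrak{o}_2$-part of $x_2\cap y_2$ to land in the situation of Lemma~\ref{lem:reducing cosets to field}, applies the general-flag (composition) version of RSK from Section~\ref{subsec: general flags} to the resulting $\pi$-torsion flags, and then adjoins the free part back to obtain the common permissibly embedded module flag, with the inverse given by gluing the two permissible embeddings. The compatibility of module-level and field-level permissibility that you flag as the main obstacle is exactly the point the paper handles through its definition of permissible embedding via $y_\pi/\pi y_2 \hookrightarrow x_\pi/\pi y_2$, so your plan matches the paper's argument in both structure and substance.
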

\begin{proof}
Let $(x_2, x_1) \in Y_{\iota(\la)}$ and $(y_2, y_1) \in Y_{\xi}$
be elements such that $x_2 \cap y_2 = z_2 \oplus z_1$ such that
$z_2 \cong \fo_2^s $ and $z_1 \cong \fo_1^t$, and denote
$\Omega=[(x_2, x_1), (y_2, y_1)]$. Let $x'_2 = x_2/ z_2$, $y'_2 =
y_2/ z_2$, $x'_1 = x_1/\pi(z_2)$ and $y'_1 = y_1/\pi(z_2)$ then
$x'_2 \cap y'_2 \cong \fo_1^t$. By Lemma~\ref{lem:reducing cosets
to field} and the RSK correspondence, we obtain that the double
coset $[(x'_2, x'_1), (y'_2, y'_1)]$ corresponds to a
$\delta$-flag $(z'_2, z'_1)$ for some partition $\delta$ of $n-s$
with its permissible embeddings $p_1$ and $p_2$ into the flags
$(x'_2, x'_1)_{\pi}/\pi(z'_2)$ and $(y'_2, y'_1)_{\pi}/\pi(z'_2)$
respectively. By adjoining it with $(\fo_2^s, \pi \fo_2^s)$, we
obtain $(u_2, u_1) = (\fo_2^s \oplus z_2, \pi \fo_2^s \oplus z_1)
\in \sab$ with permissible embeddings $p_1$ and $p_2$ in $(x_2,
x_1)$ and $(y_2, y_1)$ respectively. The converse implication
follows by combining the RSK correspondence with the definition of
permissible embedding.
\end{proof}
\begin{remark}
\label{omega notation} Observe that if $\Omega = [(x_2, x_1),
(y_2, y_1)] \in Y_{(\iota(\la))} \times_G Y_{\xi}$ corresponds to
permissible embeddings $p_1$, $p_2$ of $(z_2, z_1)$ in $(x_2,
x_1)$ and $(y_2, y_1)$, respectively, then $\pi(z_2) \cong \pi(x_2
\cap y_2)$. If $[(z_2, z_1)] = (\nu^{(2)}, \nu^{(1)})$, we shall
use the notation $\Omega_{\pi \nu}$ instead of $\Omega$ to specify
this information.
\end{remark}

\subsection{Geometric bases of modules}

The modules $\Hom_{\gg}(\cF_{\xi}, \cF_{\iota(\la)})$ for $\xi \in
\pab$, and in particular the Hecke algebras $\hab =
\mathrm{End}_G(\fab)$, have natural geometric bases indexed by
$\yab \times_{\gg} \y{\xi}$, the space of $\gg$ orbits in $\yab
\times \y{\xi}$ with respect to diagonal action of $\gg$.
Specifically, let
\begin{equation}\label{geometric.basis.mod}
 \gb_\Omega f (x)= \sum_{y:(x,y) \in \Omega} f(y), \qquad f \in
 \f{\xi},\, x \in \yab,
\end{equation}
Then $\{ \gb_{\Omega} \mid \Omega \in \yab \times_{\gg} \y{\xi} \}$ is a basis of $\Hom_{\gg}(\fab, \f{\xi})$.

\subsection{Cellular basis of the Hecke algebras } In this section we determine the cellular basis of the Hecke algebras $\hab$.
Let $\mathcal {R}$ be a refinement of the partial order on $\sab$
given by: For any $(x_2, x_1)$, $(y_2, y_1) \in \sab$, $(x_2, x_1)
\geq_{\cR} (y_2, y_1)$ if either $(x_2, x_1) \geq (y_2, y_1)$ or
$\pi x_2 > \pi y_2$. The set $\pab$ inherits this partial order as
well and is denoted by $\pab^{\cR}$ when considered as partially
ordered set under $\cR$. For $\eta \in \pab$ and orbits $\Omega_1
\in (\y{\eta} \times_{\gg} \y{\xi})^{\circ}$, $\Omega_2 \in
(\y{\eta} \times_{\gg} \yab)^{\circ}$ define
\[
\cb_{\Omega_1 \Omega_2}^{\eta}:= \gb_{\Omega_1^{\mathrm{op}}} \gb_{\Omega_2}.
\]
Then $\cb_{\Omega_1 \Omega_2}^{\eta} \in \mathrm{Hom}_{\gg}(\fab, \f{\xi})$.
Let
\[
\mathcal {C}_{\iota(\la) \xi} = \{ \cb_{\Omega_1 \Omega_2}^{\eta} \mid \eta \in \pab, \eta \leq \xi, \Omega_1 \in (\y{\eta} \times_{\gg} \yab)^{\circ} , \Omega_2 \in (\y{\eta} \times_{\gg} \y{\xi})^{\circ} \}.
\]
\begin{prop}
\label{module basis}
The set $\mathcal {C}_{\iota(\la) \xi}$ is a $\mathbb Q$-basis of the Hecke module $\mathrm{Hom}_{\gg}(\fab, \f{\xi})$.
\end{prop}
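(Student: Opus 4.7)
The strategy parallels Theorem \ref{thm:Murphy=our}, with Proposition \ref{RSK in modules} playing the role of the classical RSK correspondence. First, the cardinalities match: the module RSK bijection gives
\[
\dim_\bQ \Hom_{\gg}(\fab, \f{\xi}) = |\yab \times_{\gg} \y{\xi}| = \sum_{\eta \leq \xi} |(\y{\eta} \times_{\gg} \yab)^{\circ}|\cdot|(\y{\eta} \times_{\gg} \y{\xi})^{\circ}| = |\mathcal{C}_{\iota(\la)\xi}|,
\]
so it suffices to establish linear independence of $\mathcal{C}_{\iota(\la)\xi}$.

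Unfolding $\cb_{\Omega_1\Omega_2}^\eta = \gb_{\Omega_1^{\mathrm{op}}}\gb_{\Omega_2}$ through \eqref{geometric.basis.mod} yields
\[
\cb_{\Omega_1 \Omega_2}^{\eta} = \sum_{\Omega \in \y{\xi} \times_{\gg} \yab} N_{\Omega_1 \Omega_2 \eta}(\Omega)\, \gb_\Omega, \quad N_{\Omega_1 \Omega_2 \eta}(\Omega) = \#\{ z \in \y{\eta} : (z, y) \in \Omega_1,\ (z, x) \in \Omega_2 \},
\]
the count being evaluated on any representative $(y,x) \in \Omega$. The plan is to show that the transition matrix $(N_{\Omega_1 \Omega_2 \eta}(\Omega))$ is triangular with invertible diagonal blocks with respect to the refined partial order $\mathcal{R}$ on $\pab$, where orbits $\Omega$ are graded by the type $\eta(\Omega) \in \pab$ attached to them by Proposition \ref{RSK in modules}.

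The two claims to verify are: (i) if $\eta(\Omega) = \eta$, then $N_{\Omega_1 \Omega_2 \eta}(\Omega) \neq 0$ precisely when $\Omega$ is the orbit corresponding to the triple $(\eta, \Omega_1, \Omega_2)$ under Proposition \ref{RSK in modules}, in which case the canonical $z$ constructed in that proof witnesses $N > 0$; (ii) if $N_{\Omega_1 \Omega_2 \eta}(\Omega) \neq 0$, then $\eta$ and $\eta(\Omega)$ are $\mathcal{R}$-comparable with a definite direction. Claim (i) is a consequence of the bijectivity of Proposition \ref{RSK in modules}: if two distinct data $(\Omega_1, \Omega_2)$ both arose from permissible embeddings inside a single representative $(y,x) \in \Omega$, the bijection would force $\Omega$ to correspond to two different triples.

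The main obstacle is (ii). The existence of $z = (z_2, z_1) \in \y{\eta}$ permissibly embedded in both $y$ and $x$ constrains both $\mathrm{Rank}(\pi z_2)$, governed by the clause $\pi x_2 > \pi y_2$ in the definition of $\mathcal{R}$, and the induced residue-field flag $z_\pi$, governed by the usual partial order. The plan is to decompose $x_2 \cap y_2 = \fo_2^s \oplus \fo_1^t$ and apply Lemma \ref{lem:reducing cosets to field}, reducing the residue-field part of (ii) to the triangularity of the MDJ basis underlying Theorem \ref{thm:Murphy=our}; the $\pi$-part then falls out of the definition of $\mathcal{R}$. Together, (i) and (ii) render the transition matrix invertible, completing the argument.
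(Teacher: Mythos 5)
Your overall strategy (grade the orbits by the type attached to them by Proposition~\ref{RSK in modules}, show the transition matrix to the geometric basis is triangular for the refined order $\cR$, and reduce the residue-field part to the field case via Lemma~\ref{lem:reducing cosets to field}) is the same as the paper's, and your claim (ii) together with the dimension count is essentially what the paper proves. But your claim (i) is false, and with it the intended structure of the diagonal blocks. Within a fixed type $\eta$, the coefficient $N_{\Omega_1\Omega_2\eta}(\Omega)$ can be nonzero for orbits $\Omega$ of type $\eta$ \emph{other} than the one assigned to $(\eta,\Omega_1,\Omega_2)$ by the bijection. This already happens in the field case, which is exactly what the diagonal blocks reduce to: take $n=3$, $\la=\mu=(1,1,1)$, $\nu=(2,1)$ (so a $\nu$-flag is a single line, with two permissible positions in a full flag: equal to the flag's line, or merely inside its plane), and let $\Omega$ be the orbit of pairs of full flags sharing the same plane but with distinct lines; this orbit has RSK type $(2,1)$. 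A line inside the common plane and different from both flag lines is permissibly embedded in both flags in the ``generic'' position, giving a count of $q-1\neq 0$ for a pair $(\Omega_1,\Omega_2)$ that is \emph{not} the pair assigned to $\Omega$ by RSK. Your justification of (i) conflates two different things: the bijection of Proposition~\ref{RSK in modules} assigns one canonical triple to each orbit, but it is not defined by ``the unique configuration realizable inside a representative,'' so the existence of an extra configuration of the same type does not produce two images for $\Omega$ and yields no contradiction.

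The way to close the gap (and what the paper does) is to drop any claim of within-block diagonality: one only needs that the coefficient $a_{\Delta_{\pi\chi}}$ vanishes unless $\chi\geq_{\cR}\eta$, and that the resulting diagonal blocks are invertible. For the triangularity, one observes that a $z=(z_2,z_1)$ of type $\eta$ permissibly embedded in both $x$ and $y$ forces $\pi z_2$ to embed in $\pi(x_2\cap y_2)$; if this embedding is proper the second clause of $\cR$ gives $\chi>_{\cR}\eta$, and if $\pi z_2\cong\pi(x_2\cap y_2)$ one passes to the flags modulo $\pi z_2$ (Lemma~\ref{lem:reducing cosets to field} and Section~\ref{subsec: general flags}) where the counts $a_{\Delta_{\pi\chi}}$ literally coincide with the corresponding field-case counts, so the field-case triangularity gives $\chi\geq\eta$. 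The same identification shows that each diagonal block \emph{is} a transition matrix between a cellular basis and the geometric basis over the residue field, hence invertible by Theorem~\ref{thm:Murphy=our} --- invertible, but not diagonal. So your step (ii) and the reduction are sound in outline, but (i) must be replaced by this identification of the diagonal blocks rather than by an appeal to the bijectivity of Proposition~\ref{RSK in modules}.
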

\begin{proof} We shall prove this proposition by proving that the transition matrix between the set $\cC_{\iota(\la)\xi}$ and the geometric basis
$\{\gb_{\Omega} \}$ is upper block diagonal matrix with invertible
blocks on the diagonal. Wherever required we also use the notation
$\Omega_{\pi \nu}$ in place of $\Omega$ (see Remark~\ref{omega
notation}). We claim that
\begin{equation}
\label{cell in module} \cb_{\Omega_1 \Omega_2}^{\eta} = \sum_{
\{\Delta_{\pi \chi} \in \yab \times_{\gg} \y{\xi} ~\mid~ \chi
\geq_{\mathcal R} \eta \}} a_{\Delta_{\pi \chi}} \gb_{\Delta_{ \pi
\chi}}.
\end{equation}
Let $[(x,y)] = [(x_2, x_1), (y_2, y_1)] = \Delta_{\pi \chi}$.
Indeed, from the definition of $\cb^{\eta}_{\Omega_1\Omega_2}$ and
$\gb_{\Delta_{\pi \chi}}$, it is clear that the coefficient
$a_{\Delta_{\pi \chi}}$ is given by
\[
a_{\Delta_{\pi \chi}} = |\{ z' \in \sab \mid [z'] = \eta, [(z',x)] = \Omega_1, [(z',y)] = \Omega_2 \}|.
\]
Note that if $(z_2, z_1) \in \sab$ has permissible
embedding in $(x_2, x_1)$ and $(y_2, y_1)$ then $\pi z_2$ embeds
into $\pi (x_2 \cap y_2)$. For the case $\pi z_2 \cong
\pi (x_2 \cap y_2)$, we claim that the coefficients
$a_{\Delta_{\pi \chi}}$ are nonzero only if $\chi \geq \eta$.
Let $z_\pi/\pi z_2$ be a
$\delta$-flag and $\bar{\Omega}_1, \bar{\Omega}_2$ correspond to
permissible embeddings of $z_\pi /\pi z_2$ in $x_\pi/\pi z_2$ and
$y_\pi/ \pi z_2$, respectively. Assume that $x_\pi/\pi z_2$ is a $c_1$-flag and $y_\pi/\pi z_2$ is a $c_2$-flag for some compositions $c_1$ and $c_2$. If $\bar{\Delta}_{\chi} =
[(x_\pi/ \pi z_2, y_\pi/ \pi z_2)]$, then the coefficient of
$\gb_{\bar{\Delta}_{\pi \chi}}$ in the expression of $
c_{\bar{\Omega}_1 \bar{\Omega}_2}^{\gamma} \in \mathcal C_{c_2c_1}$ is
given by
\[
\bar{a}_{\Delta_{\pi \chi}} = |\{ z' \in \cF_{\gamma} \mid [(z' , x_\pi/ \pi z_2)] = \bar{\Omega}_1, [(z', y_\pi/ \pi z_2)] = \bar{\Omega}_2 \}|.
\]
Since by definitions $a_{\Delta_{\pi \chi}}$ = $\bar{a}_{\Delta_{\pi \chi}}$, the coefficient $a_{\Delta_{\pi \chi}}$ is non-zero only if
$\chi \geq \eta$. This implies $\chi \geq_{\cR} \eta$ and completes the proof of (\ref{cell in module}).

By the discussion above we also obtain that by arranging the
elements $\cb_{\Omega_1 \Omega_2}^{\eta}$ and
$\gb_{\Delta_{\eta}}$ for $\eta \in \pab$ in the relation $\cR$,
the obtained transition matrix between the set $$\{\cb_{\Omega_1
\Omega_2}^{\eta} \mid \eta \in \pab, \Omega_1 \in (\y{\eta}
\times_G \y{\xi})^{\circ}, \Omega_2 \in (\y{\eta} \times_G
\yab)^{\circ} \}$$ and  $\{\gb_{\Omega} \mid  \Omega \in \yab
\times_G \y{\xi}\}$ is an upper block diagonal matrix with
invertible diagonal blocks. Observe that the diagonal blocks are
obtained as the transition matrix of certain cellular basis of
Hecke algebras corresponding to the space of flags of $\kk$-vector
spaces to the corresponding geometric basis. This implies that the
set $\mathcal{C}_{\iota(\la) \xi}$ is a $\mathbb Q$-basis of
$\mathrm{Hom}_G(\fab, \f{\xi})$.
\end{proof}
The operation $(\cb_{\Omega_1 \Omega_2}^{\eta})^{\star} =
\cb_{\Omega_2 \Omega_1}^{\eta}$ gives an anti-automorphism of
$\hab$. The $\bQ$-basis of the modules
$\mathrm{Hom}_{\gg}(\f{\xi}, \fab)$ and $\mathrm{Hom}_{\gg}(\fab,
\f{\xi})$ is given by Proposition~\ref{module basis}. This,
combined with the arguments given in the proof of
Theorem~\ref{thm: cellular} proves
\begin{thm} The set $(\mathcal {C}_{\iota(\la) \iota(\la)}, \pab^{\cR})$ is a cellular basis of the Hecke algebra
$\hab$.
\end{thm}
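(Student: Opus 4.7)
The plan is to follow the template of the proof of Theorem \ref{thm: cellular} virtually verbatim, with $\pab^{\cR}$ in place of $\Lambda_{\la}$ and the module-theoretic basis from Proposition \ref{module basis} in place of $\cC_{\la\la}$. Since Proposition \ref{module basis} already gives that $\mathcal{C}_{\iota(\la)\iota(\la)}$ is a $\bQ$-basis of $\hab$, it only remains to verify the two conditions of Definition \ref{cellular algebra}. The anti-involution condition is immediate: $\star$ is defined on the basis by $(\cb^{\eta}_{\Omega_1\Omega_2})^\star = \cb^{\eta}_{\Omega_2\Omega_1}$, and the fact that it is an algebra anti-homomorphism follows because $(\gb_\Omega)^\star = \gb_{\Omega^{\mathrm{op}}}$ for every orbit $\Omega$, so reversing composition order simply interchanges $\Omega_1$ and $\Omega_2$.

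The bulk of the work is condition (\ref{cellular condition}). I would establish a module analog of Proposition \ref{ideal structure}. Define
\[
\hab^{\mu}=\mathrm{Span}_{\bQ}\{\cb^{\eta}_{\Omega_1\Omega_2}\mid \eta\leq_{\cR}\mu\},\qquad \hab^{\mu-}=\mathrm{Span}_{\bQ}\{\cb^{\eta}_{\Omega_1\Omega_2}\mid \eta<_{\cR}\mu\},
\]
and show that for any $f\in\mathrm{Hom}_G(\fab,\f{\mu})$ and $h\in\mathrm{Hom}_G(\f{\mu},\fab)$, the composition $h\circ f$ lies in $\hab^{\mu}$; then as in Proposition \ref{ideal structure}(b), the two-sided ideal property of $\hab^{\mu}$ and $\hab^{\mu-}$ will follow. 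The proof proceeds by induction on $\pab^{\cR}$. The base case consists of the $\cR$-minimal element, where $\f{\mu}$ is one-dimensional (or essentially so) and the assertion is trivial. For the inductive step, any $h$ can, modulo basis elements with strictly lower $\cR$-index, be written as a linear combination $\sum_i\alpha_i\cb^{\mu}_{\Omega_{\mathrm{id}}\Omega_{p_i}}$, where $\{p_i\}$ enumerates the permissible embeddings of $\mu$-flags into $\iota(\la)$-flags; likewise for $f$. The product $\cb^{\mu}_{\Omega_{p_i}\Omega_{\mathrm{id}}}\cb^{\mu}_{\Omega_{\mathrm{id}}\Omega_{p_j}}=\cb^{\mu}_{\Omega_{p_i}\Omega_{p_j}}$ then lies in $\hab^{\mu}$ by construction.

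The main obstacle I anticipate is the interaction between composition of basis elements and the \emph{refined} order $\cR$, which is coarser than the natural order on $\pab$ because it also ranks flags by $\pi x_2$. Concretely, I must verify that a composition $\cb^{\nu}_{\Omega_1'\Omega_2'}\cb^{\mu}_{\Omega_1\Omega_2}$ lies in the ideal associated with the $\cR$-meet $\mu\wedge_{\cR}\nu$; the field-case version of this used the genuine meet $\mu\wedge\nu$ in $\Lambda$, and the subtlety here is that $\cR$ is not the partition order. Fortunately, this is precisely the content already contained in the proof of Proposition \ref{module basis}: the transition matrix between $\mathcal{C}_{\iota(\la)\xi}$ and $\{\gb_\Omega\}$ is upper block triangular with respect to $\cR$, with each diagonal block the transition matrix of a cellular basis over $\kk$. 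Thus the $\cR$-filtration of $\hab$ is respected by composition, at each $\cR$-level reducing to the field case handled in Theorem \ref{thm: cellular}. Combining the anti-involution with this filtration argument completes the verification of (\ref{cellular condition}).
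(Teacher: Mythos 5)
Your proposal is correct and follows essentially the same route as the paper: the paper's own proof simply invokes the anti-automorphism $\star$, Proposition~\ref{module basis} for the $\bQ$-bases of $\mathrm{Hom}_{\gg}(\f{\xi},\fab)$ and $\mathrm{Hom}_{\gg}(\fab,\f{\xi})$, and then repeats the argument of Theorem~\ref{thm: cellular} (i.e.\ the module analogue of Proposition~\ref{ideal structure}), which is exactly what you spell out, including handling the refined order $\cR$ via the block-triangularity established in Proposition~\ref{module basis}.
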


\begin{cor}
\label{main-modules} There exists a collection $\{\cV_{\eta} \mid \eta \in \pab \}$ of inequivalent irreducible representations of
$\glO{n}{2}$ such that
$$ \fab = \oplus_{\eta \in \pab} \cV_{\eta}^{m_{\eta}},$$
where $m_{\eta} = |(\y{\eta} \times_G \yab)^{\circ}|$ is the multiplicity of $\cV_{\eta}$.
\end{cor}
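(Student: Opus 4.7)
The plan is to derive the decomposition from the cellularity theorem just proved by combining the semisimple cellular algebra formalism of Graham--Lehrer (the quoted theorem in Section~\ref{Cellularity}) with the double centralizer theorem. First, since $\gg = \glO{n}{2}$ is finite and we work over $\bQ$, Maschke's theorem gives that $\fab$ is a completely reducible $\gg$-module, and consequently $\hab = \End_{\gg}(\fab)$ is a semisimple $\bQ$-algebra.

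Next, I would apply the Graham--Lehrer theorem to the semisimple cellular algebra $\hab$ equipped with the cellular basis $(\mathcal{C}_{\iota(\la)\iota(\la)}, \pab^{\cR})$ supplied by the preceding theorem. This yields, for each $\eta \in \pab$ with $C^\eta \neq 0$, a pairwise inequivalent irreducible $\hab$-module $C^\eta$, together with a canonical isomorphism $\hab \cong \bigoplus_\eta C^\eta \otimes_\bQ C^{\eta*}$. Crucially, by the construction of the cell modules, $C^\eta$ has a $\bQ$-basis indexed by $\mathcal{T}(\eta) = (\y{\eta} \times_\gg \yab)^\circ$, so $\dim_\bQ C^\eta = m_\eta$.

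The third step is to invoke the double centralizer theorem for the commuting actions of $\gg$ and $\hab$ on the semisimple $\gg$-module $\fab$. This produces a decomposition $\fab \cong \bigoplus_\eta \cV_\eta \otimes_\bQ C^\eta$ where $\cV_\eta = \Hom_\hab(C^\eta, \fab)$ is a pairwise inequivalent family of irreducible $\gg$-modules; reading off multiplicities yields $\fab \cong \bigoplus_\eta \cV_\eta^{\,m_\eta}$, as required.

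The one point that genuinely needs argument -- and the main, if modest, obstacle -- is that every $\eta \in \pab$ actually contributes, i.e.\ $C^\eta \neq 0$, so that the indexing set in the statement is all of $\pab$ rather than a proper subset. For this I would show that $(\y{\eta} \times_\gg \yab)^\circ$ is nonempty for every $\eta \in \pab$. The defining conditions (1)--(3) of $\sab$ furnish representatives $(x_2, x_1)$ with $(x_2, x_1) \le \iota(y)$ for some $y$ of type $\la$; combining this chain of embeddings with the constructive part of the proof of Proposition~\ref{RSK in modules}, which converts such an order relation into a permissible embedding at the level of $\pi$-torsion flags, one exhibits a permissible embedding of an $\eta$-flag into an $\iota(\la)$-flag, forcing $m_\eta \ge 1$ and completing the argument.
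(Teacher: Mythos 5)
Your argument is correct and is essentially the route the paper intends: the corollary is stated there without proof, as an immediate consequence of the cellularity theorem combined with the quoted Graham--Lehrer/Mathas result on semisimple cellular algebras (semisimplicity of $\hab$ coming from Maschke) and the double centralizer theorem, exactly as you spell out, with $\dim_\bQ C^{\eta}=|(\y{\eta}\times_{\gg}\yab)^{\circ}|=m_{\eta}$. Your additional step verifying that every cell module is nonzero, i.e.\ that each $\eta\in\pab$ admits a permissible embedding into an $\iota(\la)$-flag so that $m_{\eta}\ge 1$ and the indexing set is genuinely all of $\pab$, is a detail the paper leaves implicit, and your sketch of it via condition (2) in the definition of $\sab$ together with the constructive part of Proposition~\ref{RSK in modules} is the right justification.
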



\end{document}